\setlist[itemize]{leftmargin=2em}
\setlist[enumerate]{leftmargin=2em}
\definecolor{darkblue}{rgb}{0.0,0,0.7} 
\definecolor{darkred}{rgb}{0.7,0,0} 
\definecolor{darkgreen}{rgb}{0, .6, 0} 
\newcommand{\defncolor}{\color{darkred}}
\newcommand{\defn}[1]{{\defncolor\emph{#1}}} 
\newtheorem{theorem}{Theorem}[section]
\newtheorem{prop}[theorem]{Proposition}
\newtheorem{cor}[theorem]{Corollary}
\newtheorem{lemma}[theorem]{Lemma}
\theoremstyle{definition}
\newtheorem{definition}[theorem]{Definition}
\newtheorem{example}[theorem]{Example}
\newtheorem{remark}[theorem]{Remark}
\numberwithin{equation}{section}
\newcommand{\idiot}[1]{\vspace{5 mm}\par \noindent
\marginpar{\textsc{Note}}
\framebox{\begin{minipage}[c]{0.95 \textwidth}
#1 \end{minipage}}\vspace{5 mm}\par}
\renewcommand{\idiot}[1]{}
\def\o{\overline}
\def\vtype{{\overrightarrow{\mathsf{type}}}}
\newcommand{\cf}{\textit{cf.} }
\setlist[itemize]{label=\small\textbullet}
\setlist{leftmargin=*}
\setlist[1]{labelindent=0.5\parindent}
\def\unprotectedboldentry#1{\textcolor{Red}{\large{#1}}}
\def\boldentry{\protect\unprotectedboldentry}
\newcommand{\tikztableauinternal}[1]{
    \def\newtableau{#1}
    \coordinate (x) at (-0.5,0.5);
    \coordinate (y) at (-0.5,0.5);
    \foreach \row in \newtableau {
        \foreach \entry in \row {
            \ifthenelse{\equal{\entry}{X} \OR \equal{\entry}{None}}
               {
                \node (y) at ($(y) + (1,0)$) {};
                \fill[color=gray!30] ($(y)-(0.5,0.5)$) rectangle +(1,1);
                \draw[color=gray, dotted] ($(y)-(0.5,0.5)$) rectangle +(1,1);
               }
               {
                \ifthenelse{\equal{\entry}{\boldentry X}}
                   {
                    \node (y) at ($(y) + (1,0)$) {};
                    \fill[color=gray] ($(y)-(0.5,0.5)$) rectangle +(1,1);
                    \draw ($(y)-(0.5,0.5)$) rectangle +(1,1);
                   }
                   {
                    \node (y) at ($(y) + (1,0)$) {\entry};
                    \draw ($(y)-(0.5,0.5)$) rectangle +(1,1);
                   }
               }
            }
        \coordinate (x) at ($(x)+(0,1)$);
        \coordinate (y) at (x);
        }
}
\def\sectionautorefname~#1\null{\S#1\null}
\def\subsectionautorefname~#1\null{\S#1\null}
\def\subsubsectionautorefname~#1\null{\S#1\null}
\def\equationautorefname~#1\null{Equation~(#1)\null}
\def\itemautorefname~#1\null{#1\null}
\def\figureautorefname~#1\null{Figure~#1\null}
\definecolor{darkblue}{rgb}{0.0,0,0.7} 
\definecolor{darkred}{rgb}{0.7,0,0} 
\definecolor{darkgreen}{rgb}{0, .6, 0} 
\newcommand{\UU}{\mathcal{U}}
\newcommand{\J}{\mathscr{J}}
\newcommand{\precdot}{\mathrel{\prec\hskip-0.5em\cdot}}
\newcommand{\smallest}{\varsigma}
\newcommand{\sym}{\mathfrak{S}}
\title[Submonoids of uniform block permutations]{The lattice of submonoids of the uniform block permutations containing the symmetric group}
\author[Orellana]{Rosa Orellana}
\address[R. Orellana]{Mathematics Department, Dartmouth College,
Hanover, NH 03755, U.S.A.}
\email{Rosa.C.Orellana@dartmouth.edu}
\urladdr{\href{https://math.dartmouth.edu/~orellana/}{https://math.dartmouth.edu/~orellana/}}
\author[Saliola]{Franco Saliola}
\address[F. Saliola]{LACIM, D\'epartement de math\'ematiques,
Universit\'e du Qu\'ebec \`a Montr\'eal, Canada}
\email{saliola.franco@uqam.ca}
\author[Schilling]{Anne Schilling}
\address[A. Schilling]{Department of Mathematics, University of California, One Shields
Avenue, Davis, CA 95616-8633, U.S.A.}
\email{anne@math.ucdavis.edu}
\urladdr{\href{http://www.math.ucdavis.edu/~anne}{http://www.math.ucdavis.edu/~anne}}
\author[Zabrocki]{Mike Zabrocki}
\address[M. Zabrocki]{Department of Mathematics and Statistics,  York University, 4700 Keele Street, Toronto,
Ontario M3J 1P3, Canada}
\email{zabrocki@mathstat.yorku.ca}
\urladdr{\href{http://garsia.math.yorku.ca/~zabrocki/}{http://garsia.math.yorku.ca/~zabrocki/}}
\keywords{Uniform block permutations, submonoids, distributive lattice, integer partitions}
\begin{document}

\date{\today}

\begin{abstract}
We study the lattice of submonoids of the uniform block permutation monoid containing the symmetric group (which is its group of units).
We prove that this lattice is distributive under union and intersection by relating the submonoids containing the symmetric group
to downsets in a new partial order on integer partitions. Furthermore, we show that the sizes of the $\mathscr{J}$-classes of the uniform block
permutation monoid are sums of squares of dimensions of irreducible modules of the monoid algebra.
\end{abstract}

\maketitle



\section{Introduction}

A block permutation is a bijection between the set of blocks of a set partition $A$ of $[k]=\{1,2,\ldots,k\}$ and the set of 
blocks of a set partition $B$ of $[\o{k}]=\{\o{1},\o{2},\ldots,\o{k}\}$.
A block permutation is uniform if the image of each block $A_i$ of  $A$ has the same cardinality as $A_i$.
There are at least three approaches to the algebra
of uniform block permutations that are established
in the literature.  The first is as a factorizable
inverse semigroup \cite{FitzGerald.2003, FitzGeraldLeech.1998, FitzGerald.2010}.
The second is as a centralizer algebra \cite{Kosuda.2000, Kosuda.2001, Kosuda.2006, Naruse.2005},
where it is a limiting case of the Tanabe algebra \cite{Tanabe.1997}.
The third is as a Hopf algebra \cite{AguiarOrellana.2008}
where the set of all uniform block permutations are graded by size
and there is an external product and coproduct.

In \cite{OSSZ.2022}, we 
studied the representation theory of the algebra of uniform block permutations as a monoid algebra
in the pursuit of computing the restriction of an irreducible $GL_n$ module to the symmetric group $\sym_n$ of
permutation matrices, where note that $\sym_n \subseteq GL_n$.

For a positive integer $k$, let $\UU_k$ denote the
monoid of uniform block permutations of the set $[k] \cup [\o{k}]$
with the usual diagram algebra product.  The irreducible
representations of $\UU_k$ are indexed by sequences of partitions
$\vec{\lambda} = (\lambda^{(1)}, \lambda^{(2)}, \ldots, \lambda^{(k)})$
such that $|\lambda^{(1)}| + 2 | \lambda^{(2)}| + \cdots + k |\lambda^{(k)}| = k$.
Let $W^{\vec{\lambda}}_{\UU_k}$ denote
the irreducible $\UU_k$-module indexed by $\vec{\lambda}$.
One of the main results of \cite{OSSZ.2022} is a formula
in terms of the operation of plethysm on symmetric functions
for the multiplicity of an irreducible symmetric group module
in ${\mathrm {Res}}^{\UU_k}_{\sym_k} W^{\vec{\lambda}}_{\UU_k}$~.

Since both $\UU_k$ and $\sym_k$ are monoids,
it is natural to ask for a description of the
monoids which lie in between $\sym_k$ and $\UU_k$.
The following theorem summarizes the results in this paper and provides
a precise description.

\begin{theorem}[Theorem~\ref{union-is-submonoid}] 
 \label{theorem:summary}
For $k$ a positive integer, the set of submonoids of $\UU_k$ that contain $\sym_k$
forms a distributive lattice with the operations of union and intersection.
\vskip .1in

\noindent
{\rm (Corollary~\ref{cor:submonoid characterization})}
Define a partial order on partitions $\mu, \lambda$ of $k$ that are not equal
to $1^k$ such that $\mu \preceq \lambda$ if and only if $\mu$ is coarser than $\lambda$
in refinement order and $\smallest_\mu > \smallest_\lambda$, where $\smallest_\mu$ denotes
the smallest part of $\mu$ not equal to $1$.
For $\mu$ a partition of $k$, let $J_\mu$ denote a $\mathscr{J}$-class of $\UU_k$.
Every submonoid $S$ of $\UU_k$ that contains $\sym_k$ is of the form
\[
    S = \sym_k \cup \bigcup_{\mu \in I} J_\mu
\]
for some down set $I$ of the partial order $\preceq$.
\end{theorem}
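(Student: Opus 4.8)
The plan is to exhibit a bijection between the submonoids of $\UU_k$ containing $\sym_k$ and the down-sets of $\preceq$, and then to read off both the lattice structure and the explicit description. First I would record the basic structure of the $\mathscr{J}$-classes of $\UU_k$: every uniform block permutation $x$ has a \emph{type} $\operatorname{type}(x)\vdash k$, the multiset of block sizes of its domain set partition (equivalently, of its range), two elements are $\mathscr{J}$-equivalent if and only if they have the same type, and $\sym_k=J_{1^k}$. One also needs that multiplying by a permutation on either side preserves the type and that $\sym_k\times\sym_k$ acts transitively on each $J_\mu$, so that $\sym_k\,x\,\sym_k=J_{\operatorname{type}(x)}$; this immediately forces any submonoid $S\supseteq\sym_k$ to be a union of $\mathscr{J}$-classes, say $S=\sym_k\cup\bigcup_{\mu\in I(S)}J_\mu$ with $I(S)=\{\mu\neq 1^k:J_\mu\subseteq S\}$. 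It then remains to show that $S\mapsto I(S)$ is a bijection onto the down-sets of $\preceq$ and is compatible with $\cap$ and $\cup$.

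The technical heart will be an analysis of products of $\mathscr{J}$-classes. For $x\in J_\mu$ and $y\in J_\nu$, the type of $xy$ is the multiset of block sizes of the join $P\vee Q$ in the lattice of set partitions of the middle set, where $P$ is the domain set partition of $x$ (of shape $\mu$) and $Q$ the range set partition of $y$ (of shape $\nu$). Each block $G$ of $P\vee Q$ is simultaneously a union of $P$-blocks and of $Q$-blocks, so $\operatorname{type}(xy)$ is a coarsening of both $\mu$ and $\nu$; and if $1<|G|<\min(\smallest_\mu,\smallest_\nu)$ then every $P$-block and every $Q$-block inside $G$ must be a singleton, which together with the connectivity of $G$ forces $|G|=1$, a contradiction. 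Hence $\smallest_{\operatorname{type}(xy)}\ge\min(\smallest_\mu,\smallest_\nu)$, and therefore $\operatorname{type}(xy)\preceq\mu$ or $\operatorname{type}(xy)\preceq\nu$ (with $\operatorname{type}(xy)=1^k$ only if $\mu=\nu=1^k$). Granting this, if $I$ is a down-set of $\preceq$ then $\sym_k\cup\bigcup_{\mu\in I}J_\mu$ is closed under multiplication — using also $\sym_k J_\mu=J_\mu\sym_k=J_\mu$ — hence is a submonoid; so $I\mapsto\sym_k\cup\bigcup_{\mu\in I}J_\mu$ is well defined, and it is inverse to $S\mapsto I(S)$.

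The remaining point — that $I(S)$ is always a down-set — amounts, since $S$ is generated by $\sym_k$ together with its $\mathscr{J}$-classes, to the converse inclusion $J_\rho\subseteq\langle\sym_k,J_\mu\rangle$ whenever $\rho\preceq\mu$. I would prove this by induction along $\preceq$, reducing to a single covering step $\sigma\lessdot\mu$ and showing $J_\sigma\subseteq J_\mu\cdot J_\mu$: writing $\sigma$ as $\mu$ with one group of parts merged, one constructs a set partition $Q$ of shape $\mu$ that agrees with the standard shape-$\mu$ partition $P$ outside that group of blocks, is ``connected'' on their union, and — in the case where the group consists of $1$'s, which forces $\smallest_\mu=2$ — realises the merge with a block of size $\smallest_\mu$ while splitting another part of $\mu$ of that size elsewhere; then $P\vee Q$ has shape $\sigma$. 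The subtle point, and the main obstacle I foresee, is that covers of $\preceq$ need not be two-part merges: when $\smallest_\mu\ge 3$ one may be forced to merge a whole group of $1$'s of total size $t\ge\smallest_\mu$ at once, and realising this inside $\langle\sym_k,J_\mu\rangle$ requires first merging $\smallest_\mu$ of those $1$'s into a block whose size is a part of $\mu$ and then absorbing the remaining $1$'s one at a time. This is elementary but needs a careful case analysis.

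Finally, distributivity will follow formally. The down-sets of any finite poset form, under $\cap$ and $\cup$, a distributive lattice; transporting along the bijection $S\leftrightarrow I(S)$ shows that the submonoids of $\UU_k$ containing $\sym_k$ are closed under intersection and under union — the latter being the assertion referenced as Theorem~\ref{union-is-submonoid} — and that this lattice is isomorphic to the lattice of down-sets of $\preceq$, hence distributive. Corollary~\ref{cor:submonoid characterization} is then exactly the statement that every such submonoid equals $\sym_k\cup\bigcup_{\mu\in I}J_\mu$ for the down-set $I=I(S)$.
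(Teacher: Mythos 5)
Your proposal is correct and follows essentially the same route as the paper: submonoids containing $\sym_k$ are unions of $\mathscr{J}$-classes, products of $\mathscr{J}$-classes are controlled by joins of set partitions of the middle row (so the smallest non-unit block size cannot drop below $\min(\smallest_\mu,\smallest_\nu)$, giving $\operatorname{type}(xy)\preceq\mu$ or $\operatorname{type}(xy)\preceq\nu$), and every relation $\rho\preceq\mu$ is realized by explicit pairs of type-$\mu$ set partitions, with the delicate step of merging $\smallest_\mu$ ones handled exactly as in the paper's Lemma~\ref{merge-singleton-blocks}. The only organizational difference is that the paper defines $\preceq$ via joins of set partitions and routes the argument through the idempotents $e_\pi$ (using $e_\pi e_\gamma=e_{\pi\vee\gamma}$), whereas you take the refinement-plus-smallest-part characterization as the definition and work with $J_\mu\cdot J_\nu$ directly; the underlying constructions coincide.
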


The group of units of a monoid $M$ is the
largest group, $G \subseteq M$, containing the identity
element of the monoid. The group of units of
the uniform block permutation monoid $\UU_k$ is the symmetric
group $\sym_k$.
The poset of submonoids of $M$ which contain $G$ \cite{ShevrinOvsyannikov.2013} does 
not seem to have such nice properties as in the case of $\UU_k$.
For instance, our experiments calculating the
lattice of monoids of the dual-inverse semigroup \cite{FitzGeraldLeech.1998}
indicate that this lattice of
submonoids for $k = 5$ does not form a distributive lattice.

The paper is organized as follows.
In Section~\ref{section:notation}, we define the necessary notation.
Sections~\ref{section:new partial order} and \ref{section:cover relations} characterize the new 
partial order on integer partitions used in Theorem~\ref{theorem:summary}.
The connection between the partial order
and the lattice of submonoids of $\UU_k$ containing
symmetric group $\sym_k$ is completed in Section \ref{section:submonoids}.
Section \ref{section:remarks} concludes with
some remarks about the consequences of this result.
In particular, we show that the sizes of the $\mathscr{J}$-classes of the uniform block
permutation monoid are sums of squares of dimensions of irreducibles.

\section*{Acknowledgement}
We thank James East, Jinting Liang, Stuart Margolis, John Rhodes and Bruce Sagan for enlightening discussions.

This work was partially supported by NSF grants DMS--2153998, DMS--2053350 and NSERC/CNSRG.

\section{Notation}
\label{section:notation}

In this section, we introduce notation related to the uniform block permutation monoid.  We
follow the presentation in~\cite{OSSZ.2022} and refer the reader to this reference for additional
details and references. Fix a positive integer $k$ in this section.

We say that $\lambda = (\lambda_1, \lambda_2, \ldots, \lambda_r)$
is a \defn{partition} of $k$ if for all $1 \leqslant i <r$,
$\lambda_i$ is a positive integer, $\lambda_i\geqslant \lambda_{i+1}$, $\lambda_r>0$ and $\sum_{i=1}^r \lambda_i = k$. 
If $\lambda$ is a partition of $k$, we write $\lambda\vdash k$.
We denote by $\mathcal{P}_k = \{ \lambda \vdash k\}$ the set of partitions of $k$.
Let $\ell(\lambda):=r$ denote the length of the partition.
We also use exponential notation to represent partitions. If the partition has $b$ occurrences of an integer $i$,
we represent it by $i^b$ (e.g. $\lambda = (4,4,4,2,1,1,1,1)$ is also represented by $1^42^14^3$).

For a finite set $X$, we say that $\pi$ is a \defn{set partition} of $X$ (indicated by
$\pi \vdash X$) if $\pi = \{ A_1, A_2, \ldots, A_r \}$,
where $\emptyset \subsetneq A_i \subseteq X$ for $1 \leqslant i \leqslant r$,
$A_i \cap A_j = \emptyset$ if $i \neq j$, and $A_1 \cup A_2 \cup \cdots \cup A_r = X$.
Denote by $\ell(\pi)=r$ the length of the set partition.
The \defn{type} of a set partition $\pi$ is
$\mathsf{type}(\pi) = \mathsf{sort}( |A| : A \in \pi)$, where the operation of
$\mathsf{sort}$ arranges the integers in the list in weakly decreasing order so that
the result is a partition.

We say that $\pi$ is \defn{finer} than $\tau$ (equivalently, $\tau$ is \defn{coarser} than $\pi$)
if for every block $A \in \pi$, there is a block $B \in \tau$ such that $A \subseteq B$.
The set of set partitions of $X$ forms a lattice under refinement order, where the join operation $\pi \vee \tau$ is defined to be 
the finest set partition coarser than both $\pi$ and $\tau$ and the meet operation $\pi \wedge \tau$ 
is the coarsest set partition which is finer than both $\pi$ and $\tau$.

Let $[k] := \{ 1, 2, \ldots, k \}$ and $[\o{k}] := \{ \o{1}, \o{2}, \ldots, \o{k} \}$.
We say that a set partition $\pi$ of $[k] \cup [\o{k}]$ is
\defn{uniform} if for each $A \in \pi$ the condition $|A \cap [k] | = |A \cap [\o{k}]|$ holds.
Let $\UU_k$ denote the set of uniform set partitions of $[k] \cup [\bar{k}]$.
For reference, we mention that the number of elements in $\UU_k$ is sequence
\href{https://oeis.org/A023998}{A023998}
in the Online Encyclopedia of Integer Sequences \cite{OEIS} and
(starting with $k=0$) the first few terms of the sequence are
\[
1, 1, 3, 16, 131, 1496, 22482, 426833,\ldots~.
\]

For a set $S \subseteq [k] \cup [\o{k}]$, let
$\o{S} = \{ i : \o{i} \in S \cap [\o{k}]\} \cup \{ \o{i} : i \in S \cap [k]\}$.
For $\pi \in \UU_k$, let
$\mathsf{top}(\pi)$ be the set partition of $[k]$ consisting of the blocks $A \cap [k]$
for $A \in \pi$ and $\mathsf{bot}(\pi)$ the set
partition of $[k]$ containing the blocks $\overline{A} \cap [k]$
for $A \in \pi$.
For $\pi \in \UU_k$, we use $\mathsf{type}(\pi)$ to denote
$\mathsf{type}(\mathsf{top}(\pi))$, which is a partition of the integer $k$.

\begin{example}
We have
\begin{align*}
    \pi & = \{ \{1,3,\bar{1},\bar{2} \}, \{2,\bar{4} \},
    \{4,6,\bar{3}, \bar{6}\}, \{5,\bar{7} \}, \{7, 8, 9, \bar{5}, \bar{8}, \bar{9} \}  \} \in \UU_9, \\
\mathsf{top}(\pi) &=\{\{ 1,3\}, \{2\}, \{4, 6\},  \{5\}, \{7, 8, 9\}\}, \\
\mathsf{bot}(\pi) &=\{ \{1,2\}, \{ 4\}, \{3,6\}, \{7\}, \{5, 8, 9\}\}, \\
\mathsf{type}(\pi) &= (3,2,2,1,1).
\end{align*}
\end{example}

We will also view elements of $\UU_k$ as a graph and use this graph structure to define
a monoid product on these elements.  We draw the graphs in two rows, the elements $[k] = \{ 1,2, \ldots, k \}$
are arranged from left to right in a top row and the elements $[\o{k}] = \{ \o1,\o2, \ldots, \o{k} \}$
from left to right in a bottom row.  An element $\pi \in \UU_k$ is represented as a graph,
where two vertices are connected with a path if and only if they are in the same set
of $\pi$. The \defn{diagram} for $\pi$ represents a class of labeled graphs that have the same connected components.

Consider $\pi, \gamma \in \UU_k$ as diagrams.  The monoid
product $\pi \gamma \in \UU_k$ is computed as follows:
Stack the graph $\pi$ on top of $\gamma$ and identify the
bottom vertices of $\pi$ with the top vertices of $\gamma$. Compute the
connected components of the three-row diagram and then eliminate the vertices of the middle row
from this diagram.
The element $\pi \gamma \in \UU_k$ has the same connected components
as this stacked graph.

\begin{example}
Let $\pi = \{ \{1,2,7,\o{2},\o{8},\o{9}\},\{3,\o{1}\},\{4,8,\o{3},\o{5}\},\{5,9,\o{4},\o{6}\},\{6,\o{7}\}\}$
and $\tau =$\\ $\{\{1,\o{2}\}, \{2,3,\o{1},\o{4}\}, \{4,6,\o{3},\o{5}\}, \{5,\o{9}\}, \{7,8,9,\o{6},\o{7},\o{8}\}\}$.
The diagram for $\pi$ is shown below on the left and the diagram for $\tau$ is on the right.
\begin{center}
\begin{tikzpicture}[scale = 0.4,thick, baseline={(0,-1ex/2)}]
\tikzstyle{vertex} = [shape = circle, minimum size = 4pt, inner sep = 1pt]
\node[vertex] (G--9) at (12.0, -1) [shape = circle, draw] {};
\node[vertex] (G--8) at (10.5, -1) [shape = circle, draw] {};
\node[vertex] (G--2) at (1.5, -1) [shape = circle, draw] {};
\node[vertex] (G-1) at (0.0, 1) [shape = circle, draw] {};
\node[vertex] (G-2) at (1.5, 1) [shape = circle, draw] {};
\node[vertex] (G-7) at (9.0, 1) [shape = circle, draw] {};
\node[vertex] (G--7) at (9.0, -1) [shape = circle, draw] {};
\node[vertex] (G-6) at (7.5, 1) [shape = circle, draw] {};
\node[vertex] (G--6) at (7.5, -1) [shape = circle, draw] {};
\node[vertex] (G--4) at (4.5, -1) [shape = circle, draw] {};
\node[vertex] (G-5) at (6.0, 1) [shape = circle, draw] {};
\node[vertex] (G-9) at (12.0, 1) [shape = circle, draw] {};
\node[vertex] (G--5) at (6.0, -1) [shape = circle, draw] {};
\node[vertex] (G--3) at (3.0, -1) [shape = circle, draw] {};
\node[vertex] (G-4) at (4.5, 1) [shape = circle, draw] {};
\node[vertex] (G-8) at (10.5, 1) [shape = circle, draw] {};
\node[vertex] (G--1) at (0.0, -1) [shape = circle, draw] {};
\node[vertex] (G-3) at (3.0, 1) [shape = circle, draw] {};
\draw[] (G-1) .. controls +(0.5, -0.5) and +(-0.5, -0.5) .. (G-2);
\draw[] (G-7) .. controls +(1, -1) and +(-1, 1) .. (G--9);
\draw[] (G--9) .. controls +(-0.5, 0.5) and +(0.5, 0.5) .. (G--8);
\draw[] (G--8) .. controls +(-1, 1) and +(1, 1) .. (G--2);
\draw[] (G--2) .. controls +(-0.75, 1) and +(0.75, -1) .. (G-1);
\draw[] (G-6) .. controls +(0.75, -1) and +(-0.75, 1) .. (G--7);
\draw[] (G--6) .. controls +(-0.75, 1) and +(0.75, -1) .. (G-5);
\draw[] (G-9) .. controls +(-1, -1) and +(1, 1) .. (G--6);
\draw[] (G--6) .. controls +(-0.6, 0.6) and +(0.6, 0.6) .. (G--4);
\draw[] (G-4) .. controls +(0.5, -0.5) and +(-0.5, -0.5) .. (G-8);
\draw[] (G--5) .. controls +(-0.6, 0.6) and +(0.6, 0.6) .. (G--3);
\draw[] (G--3) .. controls +(0.75, 1) and +(-0.75, -1) .. (G-4);
\draw[] (G-3) .. controls +(-1, -1) and +(1, 1) .. (G--1);
\end{tikzpicture}
\hskip .3in
\begin{tikzpicture}[scale = 0.4,thick, baseline={(0,-1ex/2)}]
\tikzstyle{vertex} = [shape = circle, minimum size = 4pt, inner sep = 1pt]
\node[vertex] (G--9p) at (12.0, -1) [shape = circle, draw] {};
\node[vertex] (G-5p) at (6.0, 1) [shape = circle, draw] {};
\node[vertex] (G--8p) at (10.5, -1) [shape = circle, draw] {};
\node[vertex] (G--7p) at (9.0, -1) [shape = circle, draw] {};
\node[vertex] (G--6p) at (7.5, -1) [shape = circle, draw] {};
\node[vertex] (G-7p) at (9.0, 1) [shape = circle, draw] {};
\node[vertex] (G-8p) at (10.5, 1) [shape = circle, draw] {};
\node[vertex] (G-9p) at (12.0, 1) [shape = circle, draw] {};
\node[vertex] (G--5p) at (6.0, -1) [shape = circle, draw] {};
\node[vertex] (G--3p) at (3.0, -1) [shape = circle, draw] {};
\node[vertex] (G-4p) at (4.5, 1) [shape = circle, draw] {};
\node[vertex] (G-6p) at (7.5, 1) [shape = circle, draw] {};
\node[vertex] (G--4p) at (4.5, -1) [shape = circle, draw] {};
\node[vertex] (G--1p) at (0.0, -1) [shape = circle, draw] {};
\node[vertex] (G-2p) at (1.5, 1) [shape = circle, draw] {};
\node[vertex] (G-3p) at (3.0, 1) [shape = circle, draw] {};
\node[vertex] (G--2p) at (1.5, -1) [shape = circle, draw] {};
\node[vertex] (G-1p) at (0.0, 1) [shape = circle, draw] {};
\draw[] (G-5p) .. controls +(1, -1) and +(-1, 1) .. (G--9p);
\draw[] (G-7p) .. controls +(0.5, -0.5) and +(-0.5, -0.5) .. (G-8p);
\draw[] (G-8p) .. controls +(0.5, -0.5) and +(-0.5, -0.5) .. (G-9p);
\draw[] (G-9p) .. controls +(-0.75, -1) and +(0.75, 1) .. (G--8p);
\draw[] (G--8p) .. controls +(-0.5, 0.5) and +(0.5, 0.5) .. (G--7p);
\draw[] (G--7p) .. controls +(-0.5, 0.5) and +(0.5, 0.5) .. (G--6p);
\draw[] (G-4p) .. controls +(0.6, -0.6) and +(-0.6, -0.6) .. (G-6p);
\draw[] (G--5p) .. controls +(-0.6, 0.6) and +(0.6, 0.6) .. (G--3p);
\draw[] (G-4p) .. controls +(0.75, -1) and +(-0.75, 1) .. (G--5p);
\draw[] (G-2p) .. controls +(0.5, -0.5) and +(-0.5, -0.5) .. (G-3p);
\draw[] (G--4p) .. controls +(-0.7, 0.7) and +(0.7, 0.7) .. (G--1p);
\draw[] (G--1p) .. controls +(0.75, 1) and +(-0.75, -1) .. (G-2p);
\draw[] (G-1p) .. controls +(0.75, -1) and +(-0.75, 1) .. (G--2p);
\end{tikzpicture}
\end{center}

\noindent
Their product
$\pi \tau = \{\{1,2,4,6,7,8,\o{1},\o{4},\o{6},\o{7},\o{8},\o{9}\},\{3, \o{2}\},\{5,9,\o{3},\o{5}\}\}$
is computed with the graphs:

\begin{center}
\raisebox{.25in}{
\begin{tikzpicture}[scale = 0.4,thick, baseline={(0,-1ex/2)}]
\tikzstyle{vertex} = [shape = circle, minimum size = 4pt, inner sep = 1pt]
\node[vertex] (G--9) at (12.0, -1) [shape = circle, draw] {};
\node[vertex] (G--8) at (10.5, -1) [shape = circle, draw] {};
\node[vertex] (G--2) at (1.5, -1) [shape = circle, draw] {};
\node[vertex] (G-1) at (0.0, 1) [shape = circle, draw] {};
\node[vertex] (G-2) at (1.5, 1) [shape = circle, draw] {};
\node[vertex] (G-7) at (9.0, 1) [shape = circle, draw] {};
\node[vertex] (G--7) at (9.0, -1) [shape = circle, draw] {};
\node[vertex] (G-6) at (7.5, 1) [shape = circle, draw] {};
\node[vertex] (G--6) at (7.5, -1) [shape = circle, draw] {};
\node[vertex] (G--4) at (4.5, -1) [shape = circle, draw] {};
\node[vertex] (G-5) at (6.0, 1) [shape = circle, draw] {};
\node[vertex] (G-9) at (12.0, 1) [shape = circle, draw] {};
\node[vertex] (G--5) at (6.0, -1) [shape = circle, draw] {};
\node[vertex] (G--3) at (3.0, -1) [shape = circle, draw] {};
\node[vertex] (G-4) at (4.5, 1) [shape = circle, draw] {};
\node[vertex] (G-8) at (10.5, 1) [shape = circle, draw] {};
\node[vertex] (G--1) at (0.0, -1) [shape = circle, draw] {};
\node[vertex] (G-3) at (3.0, 1) [shape = circle, draw] {};
\draw[] (G-1) .. controls +(0.5, -0.5) and +(-0.5, -0.5) .. (G-2);
\draw[] (G-7) .. controls +(1, -1) and +(-1, 1) .. (G--9);
\draw[] (G--9) .. controls +(-0.5, 0.5) and +(0.5, 0.5) .. (G--8);
\draw[] (G--8) .. controls +(-1, 1) and +(1, 1) .. (G--2);
\draw[] (G--2) .. controls +(-0.75, 1) and +(0.75, -1) .. (G-1);
\draw[] (G-6) .. controls +(0.75, -1) and +(-0.75, 1) .. (G--7);
\draw[] (G--6) .. controls +(-0.75, 1) and +(0.75, -1) .. (G-5);
\draw[] (G-9) .. controls +(-1, -1) and +(1, 1) .. (G--6);
\draw[] (G--6) .. controls +(-0.6, 0.6) and +(0.6, 0.6) .. (G--4);
\draw[] (G-4) .. controls +(0.5, -0.5) and +(-0.5, -0.5) .. (G-8);
\draw[] (G--5) .. controls +(-0.6, 0.6) and +(0.6, 0.6) .. (G--3);
\draw[] (G--3) .. controls +(0.75, 1) and +(-0.75, -1) .. (G-4);
\draw[] (G-3) .. controls +(-1, -1) and +(1, 1) .. (G--1);
\node[vertex] (G--9p) at (12.0, -3.5) [shape = circle, draw] {};
\node[vertex] (G-5p) at (6.0, -1.5) [shape = circle, draw] {};
\node[vertex] (G--8p) at (10.5, -3.5) [shape = circle, draw] {};
\node[vertex] (G--7p) at (9.0, -3.5) [shape = circle, draw] {};
\node[vertex] (G--6p) at (7.5, -3.5) [shape = circle, draw] {};
\node[vertex] (G-7p) at (9.0, -1.5) [shape = circle, draw] {};
\node[vertex] (G-8p) at (10.5, -1.5) [shape = circle, draw] {};
\node[vertex] (G-9p) at (12.0, -1.5) [shape = circle, draw] {};
\node[vertex] (G--5p) at (6.0, -3.5) [shape = circle, draw] {};
\node[vertex] (G--3p) at (3.0, -3.5) [shape = circle, draw] {};
\node[vertex] (G-4p) at (4.5, -1.5) [shape = circle, draw] {};
\node[vertex] (G-6p) at (7.5, -1.5) [shape = circle, draw] {};
\node[vertex] (G--4p) at (4.5, -3.5) [shape = circle, draw] {};
\node[vertex] (G--1p) at (0.0, -3.5) [shape = circle, draw] {};
\node[vertex] (G-2p) at (1.5, -1.5) [shape = circle, draw] {};
\node[vertex] (G-3p) at (3.0, -1.5) [shape = circle, draw] {};
\node[vertex] (G--2p) at (1.5, -3.5) [shape = circle, draw] {};
\node[vertex] (G-1p) at (0.0, -1.5) [shape = circle, draw] {};
\draw[] (G-5p) .. controls +(1, -1) and +(-1, 1) .. (G--9p);
\draw[] (G-7p) .. controls +(0.5, -0.5) and +(-0.5, -0.5) .. (G-8p);
\draw[] (G-8p) .. controls +(0.5, -0.5) and +(-0.5, -0.5) .. (G-9p);
\draw[] (G-9p) .. controls +(-0.75, -1) and +(0.75, 1) .. (G--8p);
\draw[] (G--8p) .. controls +(-0.5, 0.5) and +(0.5, 0.5) .. (G--7p);
\draw[] (G--7p) .. controls +(-0.5, 0.5) and +(0.5, 0.5) .. (G--6p);
\draw[] (G-4p) .. controls +(0.6, -0.6) and +(-0.6, -0.6) .. (G-6p);
\draw[] (G--5p) .. controls +(-0.6, 0.6) and +(0.6, 0.6) .. (G--3p);
\draw[] (G-4p) .. controls +(0.75, -1) and +(-0.75, 1) .. (G--5p);
\draw[] (G-2p) .. controls +(0.5, -0.5) and +(-0.5, -0.5) .. (G-3p);
\draw[] (G--4p) .. controls +(-0.7, 0.7) and +(0.7, 0.7) .. (G--1p);
\draw[] (G--1p) .. controls +(0.75, 1) and +(-0.75, -1) .. (G-2p);
\draw[] (G-1p) .. controls +(0.75, -1) and +(-0.75, 1) .. (G--2p);
        \draw[] (G--1) .. controls +(0, -.5) and +(0, +.5) .. (G-1p);
        \draw[] (G--2) .. controls +(0, -.5) and +(0, +.5) .. (G-2p);
        \draw[] (G--3) .. controls +(0, -.5) and +(0, +.5) .. (G-3p);
        \draw[] (G--4) .. controls +(0, -.5) and +(0, +.5) .. (G-4p);
        \draw[] (G--5) .. controls +(0, -.5) and +(0, +.5) .. (G-5p);
        \draw[] (G--6) .. controls +(0, -.5) and +(0, +.5) .. (G-6p);
        \draw[] (G--7) .. controls +(0, -.5) and +(0, +.5) .. (G-7p);
        \draw[] (G--8) .. controls +(0, -.5) and +(0, +.5) .. (G-8p);
        \draw[] (G--9) .. controls +(0, -.5) and +(0, +.5) .. (G-9p);
\end{tikzpicture}}
=
\begin{tikzpicture}[scale = 0.4,thick, baseline={(0,-1ex/2)}]
\tikzstyle{vertex} = [shape = circle, minimum size = 4pt, inner sep = 1pt]
\node[vertex] (G--9) at (12.0, -1) [shape = circle, draw] {};
\node[vertex] (G--8) at (10.5, -1) [shape = circle, draw] {};
\node[vertex] (G--7) at (9.0, -1) [shape = circle, draw] {};
\node[vertex] (G--6) at (7.5, -1) [shape = circle, draw] {};
\node[vertex] (G--4) at (4.5, -1) [shape = circle, draw] {};
\node[vertex] (G--1) at (0.0, -1) [shape = circle, draw] {};
\node[vertex] (G-1) at (0.0, 1) [shape = circle, draw] {};
\node[vertex] (G-2) at (1.5, 1) [shape = circle, draw] {};
\node[vertex] (G-4) at (4.5, 1) [shape = circle, draw] {};
\node[vertex] (G-6) at (7.5, 1) [shape = circle, draw] {};
\node[vertex] (G-7) at (9.0, 1) [shape = circle, draw] {};
\node[vertex] (G-8) at (10.5, 1) [shape = circle, draw] {};
\node[vertex] (G--5) at (6.0, -1) [shape = circle, draw] {};
\node[vertex] (G--3) at (3.0, -1) [shape = circle, draw] {};
\node[vertex] (G-5) at (6.0, 1) [shape = circle, draw] {};
\node[vertex] (G-9) at (12.0, 1) [shape = circle, draw] {};
\node[vertex] (G--2) at (1.5, -1) [shape = circle, draw] {};
\node[vertex] (G-3) at (3.0, 1) [shape = circle, draw] {};
\draw[] (G-1) .. controls +(0.5, -0.5) and +(-0.5, -0.5) .. (G-2);
\draw[] (G-2) .. controls +(0.6, -0.6) and +(-0.6, -0.6) .. (G-4);
\draw[] (G-4) .. controls +(0.6, -0.6) and +(-0.6, -0.6) .. (G-6);
\draw[] (G-6) .. controls +(0.5, -0.5) and +(-0.5, -0.5) .. (G-7);
\draw[] (G-7) .. controls +(0.5, -0.5) and +(-0.5, -0.5) .. (G-8);
\draw[] (G--9) .. controls +(-0.5, 0.5) and +(0.5, 0.5) .. (G--8);
\draw[] (G--8) .. controls +(-0.5, 0.5) and +(0.5, 0.5) .. (G--7);
\draw[] (G--7) .. controls +(-0.5, 0.5) and +(0.5, 0.5) .. (G--6);
\draw[] (G--6) .. controls +(-0.6, 0.6) and +(0.6, 0.6) .. (G--4);
\draw[] (G--4) .. controls +(-0.7, 0.7) and +(0.7, 0.7) .. (G--1);
\draw[] (G--1) .. controls +(0, 1) and +(0, -1) .. (G-1);
\draw[] (G-5) .. controls +(0.8, -0.8) and +(-0.8, -0.8) .. (G-9);
\draw[] (G-5) .. controls +(0, -.5) and +(0, .5) .. (G--5);
\draw[] (G--5) .. controls +(-0.6, 0.6) and +(0.6, 0.6) .. (G--3);
\draw[] (G-3) .. controls +(-0.75, -1) and +(0.75, 1) .. (G--2);
\end{tikzpicture}~.
\end{center}
\end{example}

This product is an extension of the product of permutations in the sense that
an element $\pi = \{A_1, \ldots, A_r\} \in \UU_k$ with all $|A_i|=2$ can be
identified with a permutation $\sigma \in \sym_k$ via
$\pi = \{ \{\sigma(1), \o{1}\}, \{\sigma(2), \o2\}, \ldots, \{ \sigma(k), \o{k}\} \}$.
Under this identification the identity permutation is the identity of the monoid,
the product for two such elements corresponds to composition of permutations, and
the maximal subgroup of $\UU_k$ containing the identity is $\sym_k$.

The set of idempotents of $\UU_k$ is denoted $E(\UU_k)$. The idempotents are all $\pi \in \UU_k$
such that $A \cap [k] = \overline{A} \cap [k]$ for all $A \in \pi$.
Each idempotent $\pi \in E(\UU_k)$ is therefore
determined by the set partition $\mathsf{top}(\pi)$ of $[k]$
and we use the notation $e_{\mathsf{top}(\pi)}$ to refer to this element.
We note the following properties of the idempotents (see for instance \cite[Section 2.5]{OSSZ.2022}):
\begin{enumerate}
    \item
        The product of two idempotents is
        \begin{equation*}
            e_{\pi} e_{\gamma} = e_{\pi \vee \gamma},
        \end{equation*}
        where $\vee$ is the join operation
        in the lattice of set partitions.

    \item
        $\sym_k$ acts on the set of idempotents $E(\UU_k)$ by conjugation:
        for all $\tau \in \sym_k$ and $e_{\pi} \in E(\UU_k)$,
        \begin{equation*}
            \tau e_{\pi} \tau^{-1} = e_{\tau^{-1}(\pi)}.
        \end{equation*}
        Consequently,
        \begin{equation*}
            e_{\pi} \tau = \tau e_{\tau(\pi)}
            \qquad\text{and}\qquad
            \tau e_{\pi} = e_{\tau^{-1}(\pi)} \tau.
        \end{equation*}

    \item
        Every element of $\UU_k$ is of the form $\tau e_{\pi}$
        for $\tau \in \sym_k$ and $\pi \vdash [k]$.

    \item \label{item:set partition same}
        If $\tau_1 e_{\pi} = \tau_2 e_{\gamma}$ for $\tau_1,\tau_2 \in \sym_k$, then $\pi = \gamma$.
\end{enumerate}

Let $x$ and $y$ be elements of a monoid $M$.
We say that $x$ and $y$ are \defn{$\mathscr{J}$-equivalent} (one of Green's relations
on semigroups \cite{Green.1951}) if $MxM = MyM$.
This is an equivalence relation and so partitions the elements of
$M$ into classes which are called the \defn{$\mathscr{J}$-classes} of $M$.
In the case that $M = \UU_k$, the
$\mathscr{J}$-classes are indexed by partitions $\mu$ of $k$
and are given by
$J_\mu = \{ \pi \in \UU_k : \mathsf{type}(\pi) = \mu \}$
(for details, see \cite[Proposition 3.5]{OSSZ.2022}).
By the above properties, we also have
$J_\mu = \{ \sigma e_\gamma \tau : \sigma, \tau \in \sym_k \}$
for any set partition $\gamma$ of $[k]$ such that $\mathsf{type}(\gamma) = \mu$.

\section{A new partial order $\preceq$ on integer partitions}
\label{section:new partial order}

We define a new partial order on integer partitions in terms of set partitions in Section~\ref{section.definition new order}
and relate it to refinement order on integer partitions in Section~\ref{section:comparison with refinement}.

\subsection{Definition of a new partial order on integer partitions}
\label{section.definition new order}

\begin{definition}
    For $\mu, \lambda \vdash k$,
    define \defn{$\mu \preceq \lambda$} if there exist
    set partitions $\pi_0, \pi_1, \ldots, \pi_\ell \vdash [k]$
    of type $\lambda$ with $\pi_0 \vee \pi_1 \vee \cdots \vee \pi_\ell$
    of type $\mu$.
\end{definition}
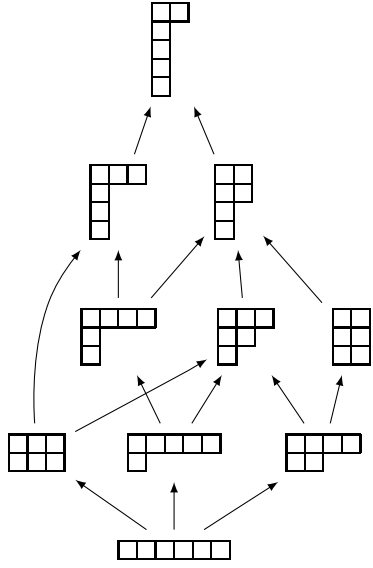
\begin{figure}
    \begin{tikzpicture}[>=latex,line join=bevel,xscale=0.75,yscale=0.6]
        \node (node_0) at (89.0bp,9.5bp) [draw,draw=none] {\ydiagram{6}};
        \node (node_1) at (89.0bp,70.5bp) [draw,draw=none] {\ydiagram{5,1}};
        \node (node_2) at (164.0bp,70.5bp) [draw,draw=none] {\ydiagram{4,2}};
        \node (node_3) at (61.0bp,143.5bp) [draw,draw=none] {\ydiagram{4,1,1}};
        \node (node_4) at (20.0bp,70.5bp) [draw,draw=none] {\ydiagram{3,3}};
        \node (node_5) at (125.0bp,143.5bp) [draw,draw=none] {\ydiagram{3,2,1}};
        \node (node_6) at (61.0bp,228.0bp) [draw,draw=none] {\ydiagram{3,1,1,1}};
        \node (node_7) at (178.0bp,143.5bp) [draw,draw=none] {\ydiagram{2,2,2}};
        \node (node_8) at (119.0bp,228.0bp) [draw,draw=none] {\ydiagram{2,2,1,1}};
        \node (node_9) at (87.0bp,324.0bp) [draw,draw=none] {\ydiagram{2,1,1,1,1}};
        \draw [black,->] (node_0) ..controls (89.0bp,25.933bp) and (89.0bp,35.596bp)  .. (node_1);
        \draw [black,->] (node_0) ..controls (109.83bp,26.884bp) and (124.45bp,38.385bp)  .. (node_2);
        \draw [black,->] (node_0) ..controls (69.925bp,26.811bp) and (56.644bp,38.167bp)  .. (node_4);
        \draw [black,->] (node_1) ..controls (80.21bp,93.788bp) and (76.453bp,103.31bp)  .. (node_3);
        \draw [black,->] (node_1) ..controls (100.38bp,93.949bp) and (105.34bp,103.72bp)  .. (node_5);
        \draw [black,->] (node_2) ..controls (151.67bp,93.949bp) and (146.3bp,103.72bp)  .. (node_5);
        \draw [black,->] (node_2) ..controls (168.36bp,93.627bp) and (170.19bp,102.91bp)  .. (node_7);
        \draw [black,->] (node_3) ..controls (61.0bp,173.07bp) and (61.0bp,182.11bp)  .. (node_6);
        \draw [black,->] (node_3) ..controls (82.574bp,175.19bp) and (90.979bp,187.14bp)  .. (node_8);
        \draw [black,->] (node_4) ..controls (55.489bp,94.736bp) and (77.184bp,109.1bp)  .. (96.0bp,122.0bp) .. controls (96.178bp,122.12bp) and (96.357bp,122.24bp)  .. (node_5);
        \draw [black,->] (node_4) ..controls (18.06bp,104.81bp) and (17.98bp,138.03bp)  .. (26.0bp,165.0bp) .. controls (28.716bp,174.14bp) and (32.972bp,183.39bp)  .. (node_6);
        \draw [black,->] (node_5) ..controls (122.92bp,173.07bp) and (122.26bp,182.11bp)  .. (node_8);
        \draw [black,->] (node_6) ..controls (70.47bp,263.24bp) and (72.979bp,272.31bp)  .. (node_9);
        \draw [black,->] (node_7) ..controls (156.25bp,174.91bp) and (147.5bp,187.15bp)  .. (node_8);
        \draw [black,->] (node_8) ..controls (107.32bp,263.32bp) and (104.19bp,272.5bp)  .. (node_9);
    \end{tikzpicture}
    \caption{Hasse diagram for $(\mathcal{P}_6 \setminus \{1^6\}, \preceq)$.}
\end{figure}

\begin{remark}
    Note that $1^k$ is incomparable to all other partitions:
    there is exactly one set partition $\pi$ of $[k]$ of type $1^k$,
    so $\mu \preceq 1^k$ implies $\mu = 1^k$; and
    if $\pi_0 \vee \pi_1 = \pi$,
    then $\pi_0 = \pi_1 = \pi$, so $1^k \preceq \mu$
    implies $\mu = 1^k$.
    For this reason we will mainly be concerned with the
    partial order on partitions of $k$ excluding $1^k$.
    We will make a connection between the submonoids of
    $\UU_k$ containing $\sym_k$ and the downsets of the
    partial order $(\mathcal{P}_k \setminus \{1^k\}, \preceq)$.
\end{remark}

\begin{prop}
    The relation $\preceq$ is a partial order on integer partitions of $k$.

    Moreover, if $\mu \preceq \lambda$, then $\mu$ is coarser than $\lambda$
    as an integer partition.
\end{prop}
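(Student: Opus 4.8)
The plan is to establish the ``Moreover'' clause first, then deduce antisymmetry from it; reflexivity is trivial and transitivity is the one step needing a real idea.

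First I would prove the monotonicity statement. Suppose $\mu \preceq \lambda$, witnessed by set partitions $\pi_0, \ldots, \pi_\ell \vdash [k]$ of type $\lambda$ with $\tau := \pi_0 \vee \cdots \vee \pi_\ell$ of type $\mu$. Since $\tau$ is coarser than $\pi_0$, every block of $\tau$ is a disjoint union of blocks of $\pi_0$; grouping the parts of $\lambda = \mathsf{type}(\pi_0)$ according to which block of $\tau$ contains the corresponding block of $\pi_0$, and summing within each group, recovers $\mathsf{type}(\tau) = \mu$. Hence $\mu$ is coarser than $\lambda$ as an integer partition. Reflexivity is immediate: taking $\pi_0 = \cdots = \pi_\ell$ all equal to any one set partition of type $\lambda$ gives a join of type $\lambda$, so $\lambda \preceq \lambda$. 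Antisymmetry then follows formally: if $\mu \preceq \lambda$ and $\lambda \preceq \mu$, the monotonicity just proved gives that $\mu$ is coarser than $\lambda$ and $\lambda$ is coarser than $\mu$, and the refinement order on integer partitions is antisymmetric (comparing numbers of parts forces $\ell(\mu) = \ell(\lambda)$, so each grouping witnessing ``coarser'' is into singletons), whence $\mu = \lambda$.

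The substantive step is transitivity. Assume $\nu \preceq \mu$ and $\mu \preceq \lambda$, with witnesses $\sigma_0, \ldots, \sigma_m$ of type $\mu$ and $\bigvee_i \sigma_i$ of type $\nu$, and $\pi_0, \ldots, \pi_\ell$ of type $\lambda$ with $\tau := \pi_0 \vee \cdots \vee \pi_\ell$ of type $\mu$. One cannot just concatenate the two families, because $\tau$ is one particular set partition of type $\mu$ while the $\sigma_i$ may be different set partitions of type $\mu$. The fix is to use that $\sym_k$ acts on set partitions of $[k]$ by lattice automorphisms (so the action commutes with $\vee$ and preserves type) and acts transitively on the set partitions of any fixed type. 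Hence for each $i$ there is $g_i \in \sym_k$ with $g_i(\tau) = \sigma_i$, and then $g_i(\pi_0), \ldots, g_i(\pi_\ell)$ all have type $\lambda$ while $g_i(\pi_0) \vee \cdots \vee g_i(\pi_\ell) = g_i(\tau) = \sigma_i$. The family $\{ g_i(\pi_j) : 0 \le i \le m,\ 0 \le j \le \ell \}$ then consists of set partitions of type $\lambda$ whose overall join equals $\bigvee_i \sigma_i$, a set partition of type $\nu$; therefore $\nu \preceq \lambda$.

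The main obstacle, as indicated, is precisely this mismatch in transitivity between the single set partition $\tau$ arising as a join and the arbitrary type-$\mu$ set partitions $\sigma_i$ appearing in the second witness; everything else is bookkeeping once one records that the $\sym_k$-action on set partitions is transitive on each type class and intertwines with taking joins.
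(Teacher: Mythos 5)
Your proposal is correct and follows essentially the same route as the paper: the ``coarser'' claim via merging blocks, antisymmetry deduced from it, and transitivity handled by acting with elements of $\sym_k$ (transitively on set partitions of a fixed type, commuting with $\vee$) to replace each $\sigma_i$ by a join of type-$\lambda$ partitions. No gaps.
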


\begin{proof}
    We first prove the second statement.
    Suppose $\mu \preceq \lambda$.
    Then $\mu = \mathsf{type}(\pi_{0} \vee \cdots \vee \pi_{\ell})$
    for some $\pi_{0}, \ldots, \pi_{\ell} \vdash [k]$ of type $\lambda$.
    Since $\pi_{0} \vee \cdots \vee \pi_{\ell}$ is obtained by
    merging blocks of $\pi_{0}$, its type is obtained
    by merging parts of $\mathsf{type}(\pi_{0}) = \lambda$. Hence,
    $\mu$ is coarser than $\lambda$.

    We now prove that $\preceq$ is a partial order.

    \smallskip \noindent
    \emph{Reflexivity}. $\lambda \preceq \lambda$ because there exists a set partition $\pi$ with
    $\mathsf{type}(\pi) = \lambda$.

    \smallskip \noindent
    \emph{Transitivity}.
    Suppose $\nu \preceq \mu$ and $\mu \preceq \lambda$.
    There exist
    \begin{itemize}
        \item
            $\pi_{0}, \ldots, \pi_{\ell} \vdash [k]$ of type $\lambda$
            with $\mathsf{type}(\pi_{0} \vee \cdots \vee \pi_{\ell}) = \mu$; and

        \item
            $\gamma_{0}, \ldots, \gamma_{m} \vdash [k]$ of type $\mu$
            with $\mathsf{type}(\gamma_{0} \vee \cdots \vee \gamma_{m}) = \nu$.
    \end{itemize}
    Since $\pi_{0} \vee \cdots \vee \pi_{\ell}$ has type $\mu$,
    there exist $u_0, \ldots, u_m \in \sym_k$ such that
    $\gamma_i
    = u_i(\pi_{0} \vee \cdots \vee \pi_{\ell})
    = u_i(\pi_{0}) \vee \cdots \vee u_i(\pi_{\ell})$ for all $0 \leqslant i \leqslant m$.
    Thus,
    \begin{equation*}
        \gamma_0 \vee \cdots \vee \gamma_m
        =
        u_0(\pi_{0}) \vee \cdots \vee u_0(\pi_{\ell})
        \vee \cdots \vee
        u_m(\pi_{0}) \vee \cdots \vee u_m(\pi_{\ell})
    \end{equation*}
    is a join of set partitions of type $\lambda$, showing that
    $\nu \preceq \lambda$.

    \smallskip \noindent
    \emph{Antisymmetry}.
    If $\mu \preceq \lambda$ and $\lambda \preceq \mu$, then
    $\mu$ is coarser than $\lambda$ and $\lambda$
    is coarser than $\mu$ in the refinement order
    on integer partitions and so $\mu = \lambda$.
\end{proof}

\subsection{Comparison with refinement order}
\label{section:comparison with refinement}

The next result describes the precise relationship between
$\preceq$ and refinement order on integer partitions.

\begin{theorem}
    \label{smallest-part-reformulation}
    For $\nu \vdash k$ with $\nu \neq 1^k$, let $\smallest_\nu$ be the smallest
    part of $\nu$ not equal to $1$.
    Then
    \begin{equation*}
        \mu \preceq \lambda
        \qquad\text{if and only if}\qquad
        \mu \text{~is~coarser~than~} \lambda \text{~~and~~} \smallest_\mu \geqslant \smallest_\lambda.
    \end{equation*}
\end{theorem}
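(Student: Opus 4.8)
\textit{Overview.} The plan is to prove the two implications separately, leaning on the Proposition above for the easy half and reducing the converse to two ``elementary merges'' chained through transitivity of $\preceq$. Throughout, recall that we work inside $\mathcal{P}_k \setminus \{1^k\}$, so $\mu,\lambda \neq 1^k$.

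\textit{The forward implication.} Suppose $\mu \preceq \lambda$, say $\mu = \mathsf{type}(\pi_0 \vee \cdots \vee \pi_\ell)$ with each $\pi_i$ of type $\lambda$. By the Proposition, $\mu$ is already coarser than $\lambda$, so it remains only to show $\smallest_\mu \geqslant \smallest_\lambda$. Here I would use the standard description of joins in the partition lattice: the blocks of $\pi_0 \vee \cdots \vee \pi_\ell$ are the connected components of the graph on $[k]$ whose edges join $i$ and $j$ whenever $i,j$ lie in a common block of some $\pi_t$. If $B$ is a block of the join with $|B| \geqslant 2$, choose an edge inside $B$; its two endpoints lie in a common block of some $\pi_t$, which is a non-singleton block of $\pi_t$, hence of size $\geqslant \smallest_\lambda$, and it is contained in $B$. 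Thus every non-singleton block of the join has size $\geqslant \smallest_\lambda$; since $\lambda \neq 1^k$ forces $\mu \neq 1^k$, this gives $\smallest_\mu \geqslant \smallest_\lambda$.

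\textit{The reverse implication: two elementary lemmas.} Call $\mu'$ a \emph{good merge} of $\lambda$ if it is obtained from $\lambda$ by replacing two parts $a \leqslant b$ with $a+b$ where $b \geqslant 2$, and a \emph{small merge} if it is obtained by replacing $j$ parts equal to $1$, with $j \geqslant \smallest_\lambda$, by a single part $j$. I would prove that a good merge $\mu'$ of $\lambda$ satisfies $\mu' \preceq \lambda$, and that a small merge $\mu'$ of $\lambda$ satisfies $\mu' \preceq \lambda$. For the good merge, take $\pi_0$ of type $\lambda$ with blocks $A = \{1,\dots,a\}$ and $B = \{a+1,\dots,a+b\}$, and let $\pi_1$ agree with $\pi_0$ outside $A \cup B$ but partition $A \cup B$ into $B' = \{1\} \cup \{a+1,\dots,a+b-1\}$ (of size $b$) and its complement in $A \cup B$ (of size $a$); since only a set of blocks of sizes $\{a,b\}$ has been rearranged, $\pi_1$ again has type $\lambda$, and in $\pi_0 \vee \pi_1$ the block $B'$ fuses $A$ and $B$ while nothing else changes, so the join has the type of the good merge. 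For the small merge, take $\pi_0$ of type $\lambda$ whose singleton blocks include $\{1\},\dots,\{j\}$ and with a distinguished block $D$ of size $m := \smallest_\lambda$; for each $m$-subset $E \subseteq \{1,\dots,j\}$ let $\pi_E$ be $\pi_0$ with $E$ made into a single block and $D$ broken into $m$ singletons (this preserves the type, trading one $m$-block for $m$ singletons and back). Choosing $m$-subsets $E_1,\dots,E_\ell$ of $\{1,\dots,j\}$ covering $\{1,\dots,j\}$ and such that $\{1,\dots,j\}$ is connected in the graph whose edges join two elements lying in a common $E_i$ --- possible since $j \geqslant m \geqslant 2$, e.g.\ the intervals $E_i = \{i,\dots,i+m-1\}$ --- the join $\pi_0 \vee \pi_{E_1} \vee \cdots \vee \pi_{E_\ell}$ fuses $\{1,\dots,j\}$ into one block, keeps $D$ as a block of size $m$, and leaves every other block of $\pi_0$ intact, so it has the type of the small merge.

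\textit{Assembling the converse, and the main obstacle.} Given $\mu$ coarser than $\lambda$ with $\smallest_\mu \geqslant \smallest_\lambda$, the coarsening groups the parts of $\lambda$ into blocks $S_1,\dots,S_r$ with $\mu_j = \sum_{p \in S_j} p$. A group $S_j = \{p\}$ needs no move; a group $S_j$ containing a part $\geqslant 2$ can be fully merged by $|S_j|-1$ successive good merges, each time merging the running sum (which stays $\geqslant 2$) with the next part of $S_j$; and a group $S_j$ consisting of $|S_j| \geqslant 2$ parts equal to $1$ has $\mu_j = |S_j|$ a part of $\mu$ of size $\geqslant 2$, hence $|S_j| \geqslant \smallest_\mu \geqslant \smallest_\lambda$, so it is a small merge. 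I would perform all the small merges first and then all the good merges; since each small merge introduces a part of size $\geqslant \smallest_\lambda$ and does not touch any part of $\lambda$ that is $\geqslant 2$, the smallest non-unit part of the current partition stays equal to $\smallest_\lambda$ throughout the small-merge phase, so the small-merge lemma keeps applying, while good merges impose no such constraint. Chaining the resulting instances of $\preceq$ through transitivity yields $\mu \preceq \lambda$. The step I expect to be the crux is the small-merge lemma: one must re-partition the chosen singletons using a block of size $\smallest_\lambda$ without disturbing the global multiset of block sizes (hence the ``borrow an $m$-block, repay with $m$ singletons'' device) and verify that the chosen family of set partitions joins so as to connect \emph{exactly} $\{1,\dots,j\}$ and nothing more; the related delicate point is sequencing the elementary moves so that the hypothesis ``$j \geqslant \smallest$ of the current partition'' of that lemma holds at every step, which is precisely why all small merges must precede all good merges.
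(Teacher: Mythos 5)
Your proposal is correct; the reverse implication follows the paper's skeleton while the forward implication is a genuinely different argument. For ($\Leftarrow$) you reduce to the same two elementary moves the paper isolates as Lemma~\ref{merge-non-singleton-blocks} (your ``good merge'') and Lemma~\ref{merge-singleton-blocks} (your ``small merge''), and your good-merge construction (swap one element between the two blocks) is essentially the paper's. Your small merge differs in mechanics: the paper first produces a block of size exactly $\smallest_\lambda$ from a join of \emph{two} set partitions and then grows it one singleton at a time by good merges, whereas you realize the block of size $j$ in a single join of many set partitions $\pi_{E_i}$ indexed by overlapping intervals, using the ``borrow an $m$-block, repay with $m$ singletons'' device to preserve the type; both are valid, and your explicit bookkeeping of why the hypothesis $j\geqslant\smallest$ persists (all small merges before all good merges, each small merge only adding parts $\geqslant\smallest_\lambda$) fills in a point the paper leaves terse. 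For ($\Rightarrow$) the paper argues by contradiction along the chain $\pi_0\leqslant\pi_0\vee\pi_1\leqslant\cdots$, locating the first step at which a block of size $\smallest_\mu$ appears and showing it would force such a block earlier; your argument is more direct and cleaner: by the connected-component description of joins, every non-singleton block of $\pi_0\vee\cdots\vee\pi_\ell$ contains a non-singleton block of some $\pi_t$, hence has size at least $\smallest_\lambda$, which immediately gives $\smallest_\mu\geqslant\smallest_\lambda$. The trade-off is that the paper's chain analysis is reused almost verbatim inside the proof of Theorem~\ref{union-is-submonoid}, whereas your direct argument would need to be adapted there; on its own, though, your version of the forward direction is shorter and less delicate.
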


The proof is given below after the presentation of a few necessary lemmas.

Recall that if $\mu \lessdot \lambda$ is a cover relation in the refinement
order on integer partitions, then $\mu$ is obtained from $\lambda$
by merging two parts $\lambda_i$ and $\lambda_j$.
We first prove that $\mu \precdot \lambda$ is a cover relation
if at least one of the two merged parts is greater than $1$.

\begin{lemma}
    \label{merge-non-singleton-blocks}
    If $\mu$ is obtained from $\lambda$ by summing two parts
    $\lambda_i$ and $\lambda_j$ not both equal to $1$,
    then $\mu \precdot \lambda$.
\end{lemma}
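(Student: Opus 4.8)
The plan is to establish the two ingredients of a cover relation separately: that $\mu \prec \lambda$ in the order $\preceq$, and that no partition of $k$ lies strictly between them.

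For $\mu \preceq \lambda$ it suffices to exhibit two set partitions of $[k]$ of type $\lambda$ whose join has type $\mu$ (so $\ell = 1$ in the definition). Since $\lambda_i$ and $\lambda_j$ are not both $1$, I may assume $\lambda_i \geqslant 2$; write $m = \lambda_i$ and $n = \lambda_j$, and let $\rho$ be any set partition of $[k] \setminus \{1, \dots, m+n\}$ whose block sizes are the parts of $\lambda$ other than $\lambda_i, \lambda_j$. Put
\[
    \pi_0 = \bigl\{\{1,\dots,m\},\ \{m+1,\dots,m+n\}\bigr\} \cup \rho,
    \qquad
    \pi_1 = \bigl\{\{2,\dots,m+1\},\ \{1\} \cup \{m+2,\dots,m+n\}\bigr\} \cup \rho.
\]
Both have type $\lambda$. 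Outside $S := \{1,\dots,m+n\}$ they coincide with $\rho$, while inside $S$ the $\pi_0$-block $\{1,\dots,m\}$ and the $\pi_1$-block $\{2,\dots,m+1\}$ share the element $2$ (here $m \geqslant 2$ is used), and $\{2,\dots,m+1\}$ meets the $\pi_0$-block $\{m+1,\dots,m+n\}$ in the element $m+1$; hence the join fuses all of $S$ into one block. Therefore $\pi_0 \vee \pi_1 = \{S\} \cup \rho$ has type $\mu$, giving $\mu \preceq \lambda$, and $\mu \neq \lambda$ because merging two parts strictly decreases the number of parts.

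For the covering property, suppose $\mu \preceq \nu \preceq \lambda$ for some $\nu \vdash k$. By the preceding Proposition, $\mu$ is coarser than $\nu$ and $\nu$ is coarser than $\lambda$ in refinement order, so $\ell(\mu) \leqslant \ell(\nu) \leqslant \ell(\lambda)$. Since $\mu$ arises from $\lambda$ by merging two parts, $\ell(\mu) = \ell(\lambda) - 1$, forcing $\ell(\nu) \in \{\ell(\lambda) - 1,\, \ell(\lambda)\}$. If $\ell(\nu) = \ell(\lambda)$, then $\nu$ is coarser than $\lambda$ with the same number of parts; as every merge of two parts strictly lowers the number of parts, this forces $\nu = \lambda$. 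If $\ell(\nu) = \ell(\mu)$, the identical argument applied to ``$\mu$ coarser than $\nu$'' gives $\nu = \mu$. Hence $\nu \in \{\mu, \lambda\}$ and $\mu \precdot \lambda$.

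The crux is the construction in the first step: one must choose the two type-$\lambda$ set partitions so that their join genuinely merges the two distinguished blocks, and this is precisely where the hypothesis that one of $\lambda_i, \lambda_j$ exceeds $1$ enters — if both were $1$, there is only one set partition of that type on the two relevant points, so the join could never merge them (consistent with Theorem~\ref{smallest-part-reformulation}). Everything after that is routine bookkeeping with partition lengths together with the already-proved Proposition.
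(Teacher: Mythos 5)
Your proof is correct and follows essentially the same route as the paper: your $\pi_1$ is exactly the paper's construction of swapping one element between the two distinguished blocks (here the elements $1$ and $m+1$), using $\lambda_i\geqslant 2$ to guarantee the swapped block still meets the original, and your length-counting argument for the covering property is just a spelled-out version of the paper's observation that merging two parts is a cover in refinement order. (Your closing parenthetical about the case $\lambda_i=\lambda_j=1$ is a harmless aside and not needed for the argument.)
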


\begin{proof}
    Let $\pi = \{B_1, \ldots, B_\ell\}$ be of type $\lambda$
    with $|B_k| = \lambda_k$ for all $k \in [\ell]$.
    Fix $b_i \in B_i$ and $b_j \in B_j$ and let
    \begin{gather*}
        \pi' = \big(\{B_1, \ldots, B_\ell\} \setminus \{B_i, B_j\}\big)
        \cup \big\{ B'_i, B'_j \big\},
        \shortintertext{where}
        B_i' = (B_i \setminus \{b_i\}) \cup \{b_j\}
        \quad\text{and}\quad
        B_j' = (B_j \setminus \{b_j\}) \cup \{b_i\}.
    \end{gather*}
    Assume without loss of generality that $|B_i| = \lambda_i \geqslant 2$. Then the intersection
    $B_i \cap B'_i$ is nonempty because it contains $B_i \setminus \{b_i\}$.
    Also, $B_j \cap B'_i \neq \emptyset$ because it contains $b_j$. Furthermore, $\pi\neq \pi'$.
    Hence,
    \begin{equation*}
        \pi \vee \pi'
        = \Big(\{B_1, \ldots, B_\ell\} \setminus \{B_i, B_j\}\Big)
        \cup \Big\{ B_i \cup B_j \Big\}
    \end{equation*}
    is of type $\mu$ and so $\mu \prec \lambda$
    because $\mathsf{type}(\pi') = \mathsf{type}(\pi) = \lambda$.

    If $\mu \prec \lambda$ is not a cover relation,
    then $\mu \prec \nu \prec \lambda$ for some $\nu$,
    which implies $\mu$ is coarser than $\nu$ and $\nu$ is coarser than $\lambda$ in refinement
    order. But since $\mu \lessdot \lambda$ in refinement
    order, this is a contradiction. Thus, $\mu \precdot \lambda$.
\end{proof}

\begin{lemma}
    \label{merge-singleton-blocks}
    Suppose $\mu$ is obtained from $\lambda \neq 1^k$ by merging $s$ ones:
    explicitly,
    \begin{equation*}
        \lambda = (\lambda_1, \ldots, \lambda_{m}, 1^{t})
        \quad\text{and}\quad
        \mu = \mathsf{sort}(\lambda_1, \ldots, \lambda_{m}, s, 1^{t - s}),
    \end{equation*}
    where $\lambda_{m} > 1$ and $t \geqslant s$.
    If $s \geqslant \lambda_{m}$, then $\mu \prec \lambda$.
\end{lemma}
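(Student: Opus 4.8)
The plan is to prove $\mu \prec \lambda$ by producing, directly from the definition of $\preceq$, a family of set partitions of type $\lambda$ whose join has type $\mu$; strictness then comes for free, since $\mu$ has fewer parts than $\lambda$ (we merge $s \geqslant \lambda_m \geqslant 2$ ones into a single part of size $s$). The geometric idea is to start from a set partition $\pi$ of type $\lambda$ carrying a distinguished block $B_m$ of size $\lambda_m$, and to use a copy of $B_m$ as a ``template of width $\lambda_m$'' that we slide along the run of $s$ singletons we wish to amalgamate. The hypothesis $s \geqslant \lambda_m$ guarantees the window fits inside the run of singletons, and $\lambda_m \geqslant 2$ guarantees that consecutive positions of the window overlap; the latter is exactly what makes the join connect the entire run.

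In detail, I would fix $\pi = \{B_1, \dots, B_m\} \cup \{\{c_1\}, \dots, \{c_t\}\}$ of type $\lambda$ with $|B_i| = \lambda_i$, and write $B_m = \{d_1, \dots, d_{\lambda_m}\}$. Set $\pi_0 = \pi$, and for each $1 \leqslant j \leqslant s - \lambda_m + 1$ let $\rho_j$ be obtained from $\pi$ by removing the block $B_m$ and the singletons $\{c_j\}, \dots, \{c_{j+\lambda_m-1}\}$, and inserting instead the block $\{c_j, c_{j+1}, \dots, c_{j+\lambda_m-1}\}$ together with the singletons $\{d_1\}, \dots, \{d_{\lambda_m}\}$. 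Three quick checks: $\mathsf{type}(\rho_j) = \lambda$, because $B_m$ is replaced by a block of the same size and $\lambda_m$ singletons are replaced by $\lambda_m$ singletons; the index set is nonempty, because $s \geqslant \lambda_m$; and for consecutive $j$, the windows of $\rho_j$ and $\rho_{j+1}$ meet in $\{c_{j+1}, \dots, c_{j+\lambda_m-1}\} \neq \emptyset$, because $\lambda_m \geqslant 2$.

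The core of the argument is then the computation of $\sigma := \pi_0 \vee \rho_1 \vee \cdots \vee \rho_{s - \lambda_m + 1}$, which I would carry out by tracking, element by element, which block it occupies in each term. The blocks $B_1, \dots, B_{m-1}$ occur unchanged in every term and their elements occur in no other block, so each is a block of $\sigma$. The elements $d_1, \dots, d_{\lambda_m}$ lie together in $\pi_0$ (as $B_m$) and are singletons in every $\rho_j$, so $\{d_1, \dots, d_{\lambda_m}\}$ is a block of $\sigma$ of size $\lambda_m$. The elements $c_1, \dots, c_s$ are linked by a chain --- $c_j, \dots, c_{j+\lambda_m-1}$ share the block of $\rho_j$ and consecutive windows overlap --- so they all fall into one block of $\sigma$; moreover in each term a $c_i$ is either a singleton or lies in a block consisting only of $c$'s, so no other element is dragged in, and $\{c_1, \dots, c_s\}$ is a block of $\sigma$ of size $s$. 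Finally $c_{s+1}, \dots, c_t$ are singletons throughout. Hence $\mathsf{type}(\sigma) = \mathsf{sort}(\lambda_1, \dots, \lambda_m, s, 1^{t-s}) = \mu$, so $\mu \preceq \lambda$, and since $\mu \neq \lambda$ we get $\mu \prec \lambda$.

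I expect the only genuinely delicate point to be the two ``no other element joins this block'' clauses in the join computation --- verifying that neither $\{d_1, \dots, d_{\lambda_m}\}$ nor $\{c_1, \dots, c_s\}$ secretly absorbs extra elements. This reduces to the bookkeeping observation that the only difference between $\rho_j$ and $\pi$ is confined to $B_m$, the $\lambda_m$ singletons $\{c_j\}, \dots, \{c_{j+\lambda_m-1}\}$, and the $\lambda_m$ new singletons $\{d_i\}$; everything else is routine. Note also that the degenerate case $s = \lambda_m$ (only $\rho_1$ is used, so $\ell = 1$) is covered uniformly by the same construction, and nothing special is needed when $m = 1$, i.e.\ when $\lambda$ has a single part exceeding $1$.
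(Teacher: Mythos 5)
Your proof is correct. It shares with the paper's argument the key device of swapping the contents of $B_m$ with $\lambda_m$ of the singletons, but the two proofs deploy that device differently. The paper performs the swap exactly once, obtaining two set partitions of type $\lambda$ whose join has type $\nu = (\lambda_1, \ldots, \lambda_m, \lambda_m, 1^{t-\lambda_m})$, and then descends from $\nu$ to $\mu$ by repeatedly merging the new part with a single $1$ via Lemma~\ref{merge-non-singleton-blocks} and invoking transitivity of $\preceq$. You instead slide the window of size $\lambda_m$ across the run of $s$ singletons and take the join of all $s-\lambda_m+2$ resulting set partitions in one step, producing the block $\{c_1,\ldots,c_s\}$ of size $s$ directly from the definition of $\preceq$, with no appeal to transitivity or to the other lemma. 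Your version is more self-contained and makes the role of each hypothesis transparent ($s \geqslant \lambda_m$ so the window fits inside the run, $\lambda_m \geqslant 2$ so consecutive windows overlap); the paper's version is shorter on the page because it reuses machinery already established. The bookkeeping you flag as the delicate point --- in every term each $c_i$ is either a singleton or lies in a block consisting only of $c$'s, and the $d_i$'s are singletons in every $\rho_j$ --- is indeed exactly what rules out spurious mergers, and your verification of it is complete; the degenerate case $s=\lambda_m$ reduces to the paper's two-partition construction.
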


\begin{proof}
    Let $\pi = \{B_1, \ldots, B_\ell\}$ be a set partition of type $\lambda$
    with $|B_k| = \lambda_k$ for all $k \in [\ell]$.
    The block $B_{m}$ has size $\lambda_{m} \leqslant s$ and $B_{m+1}, \ldots, B_{\ell}$ are singletons,
    so we can write
    \begin{equation*}
        B_{m} = \{a_1, a_2, \ldots, a_{\lambda_{m}}\},
        \quad B_{m+1} = \{b_{1}\},
        \quad \ldots \quad , \quad
        B_{m+{\lambda_{m}}} = \{b_{{\lambda_{m}}}\}.
    \end{equation*}
    Let $\gamma$ be the set partition obtained from $\pi$ by replacing
    the set partitions
    $B_{m}, B_{m+1}, \ldots, B_{m+{\lambda_{m}}}$ by
    \begin{equation*}
        B'_{m} = \{b_1, b_2, \ldots, b_{\lambda_{m}}\},
        \quad B'_{m+1} = \{a_{1}\},
        \quad \ldots \quad , \quad
        B'_{m+{\lambda_{m}}} = \{a_{{\lambda_{m}}}\}.
    \end{equation*}
    Then $\gamma$ also has type $\lambda$ and
    \begin{equation*}
        \pi \vee \gamma =
        \pi \setminus \{B_{m+1}, \ldots, B_{m+\lambda_{m}}\} \cup \{B_{m+1} \cup \cdots \cup B_{m+\lambda_{m}}\}
    \end{equation*}
    has type
    $\nu := (\lambda_{1}, \ldots, \lambda_{m}, \lambda_{m}, 1^{t - \lambda_{m}})$,
    proving that $\nu \prec \lambda$.
    Note that $\mu \preceq \nu$ because
    $\mu$ can be obtained from $\nu$ by
    first merging $\lambda_m$ with $1$,
    then merging $\lambda_m + 1$ with another $1$, and so on:
    \begin{equation*}
        \begin{aligned}[b]
            \lambda \succ \nu & = (\lambda_{1}, \ldots, \lambda_{m}, \lambda_{m}, 1^{t - \lambda_{m}})
            \\
            & \succ \mathsf{sort}(\lambda_{1}, \ldots, \lambda_{m}, \lambda_{m} + 1, 1^{t - (\lambda_{m} + 1)})
            \\
            & \succ \mathsf{sort}(\lambda_{1}, \ldots, \lambda_{m}, \lambda_{m} + 2, 1^{t - (\lambda_{m} + 2)})
            \\
            & \succ \cdots
            \succ \mathsf{sort}(\lambda_{1}, \ldots, \lambda_{m}, s, 1^{t - s}) = \mu.
        \end{aligned}
        \qedhere
    \end{equation*}
\end{proof}

\begin{proof}[Proof of Theorem~\ref{smallest-part-reformulation}]
    ($\Leftarrow$)
    Suppose that $\mu$ is coarser than $\lambda$ and $\smallest_\mu \geqslant \smallest_\lambda$.

    If $\mu$ can be obtained from $\lambda$ by merging at each step with a part
    of size at least $2$, then $\mu \preceq \lambda$ by Lemma~\ref{merge-non-singleton-blocks}.

    Suppose instead that there are parts of $\mu$ that are obtained from
    $\lambda$ by merging $1$s.
    By first merging $\smallest_\lambda$ ones (Lemma~\ref{merge-singleton-blocks}),
    and then successively merging this new part with $1$s (Lemma~\ref{merge-non-singleton-blocks}),
    it follows that $\mu \preceq \lambda$.

    ($\Rightarrow$)
    Suppose $\mu \preceq \lambda$ and $\smallest_\mu < \smallest_\lambda$.
    Since $\mu \preceq \lambda$, there exist set partitions $\pi_0, \ldots,
    \pi_\ell$ of type $\lambda$ with $\pi_0 \vee \cdots \vee \pi_\ell$ of type $\mu$.
    This yields a chain in the lattice of set partitions
    \begin{equation*}
        \underbrace{\pi_0}_{\text{type~$\lambda$}}
        ~\leqslant~ \pi_0 \vee \pi_1
        ~\leqslant~ \cdots
        ~\leqslant~ \underbrace{\pi_0 \vee \pi_1 \vee \cdots \vee \pi_\ell}_{\text{type~$\mu$}}.
    \end{equation*}

    Since $\pi_0 \vee \cdots \vee \pi_\ell$
    contains a block of size $\smallest_\mu$
    and $\pi_0$ does not,
    there exists $i \in [\ell]$ such that
    $\gamma := \pi_0 \vee \cdots \vee \pi_{i-1}$
    does not contain a block of size $\smallest_\mu$
    and $\gamma \vee \pi_{i}$ does.

    This block of size $\smallest_\mu$ is necessarily obtained by merging
    singleton blocks of $\pi_{i}$ since all other blocks of $\pi_{i}$ are
    of size greater than $\smallest_\mu$.
    Hence,
    there exist blocks $C_1, \ldots, C_{\smallest_\mu-1} \in \gamma$
    and singletons $B_1, \ldots, B_{\smallest_\mu} \in \pi_i$
    such that
    $B_1 \sim_{C_1} B_2 \sim_{C_2} \cdots \sim_{C_{\smallest_\mu-1}} B_{\smallest_\mu}$,
    where $B \sim_{C} B'$ means $C$ intersects both $B$ and $B'$.

    Let us show that each $C_i$ equals $B_1 \cup \cdots \cup B_{\smallest_\mu}$.
    Since $B_1$ and $B_2$ are singletons and $B_1 \sim_{C_1} B_2$,
    we have that $B_1 \subseteq C_1$ and $B_2 \subseteq C_1$.
    Similarly, $B_2, B_3 \subseteq C_2$.
    But since $C_1 \cap C_2 = \emptyset$ if $C_1 \neq C_2$,
    it follows that $C_2 = C_1$ because both contain $B_2$.
    Continuing in this way, $C_{1} = \cdots = C_{\smallest_\mu-1}$,
    and $C_{1} \supseteq B_1 \cup \cdots \cup B_{\smallest_\mu}$.
    Since $C_{1}$ is contained in the block of $\gamma \vee \pi_i$
    that contains $B_1$, it follows that
    $C_{1} \subseteq B_1 \cup \cdots \cup B_{\smallest_\mu}$.

    Thus, $\gamma$ contains a block of size $\smallest_\mu$,
    contradicting the choice of $\gamma$.
\end{proof}

\section{Cover relations for $\preceq$}
\label{section:cover relations}

The goal of this section is the characterization of the cover relations of~$\preceq$.

\begin{theorem}
    \label{cover-relations}
    Let $\mu$ and $\lambda$ be two partitions of $k$. Then
    $\mu \precdot \lambda$ if and only if either:
    \begin{enumerate}
        \item
            $\mu$ is obtained from $\lambda$ by merging two parts
            $\lambda_i$ and $\lambda_j$ that are not both $1$;

        \item
            there exist integers $t \geqslant s \geqslant 2$ such that
            $\lambda_{m-1} \geqslant s$,
            \begin{equation*}
                \lambda = (\lambda_1, \ldots, \lambda_{m-1}, s, \underbrace{1, \ldots, 1}_{t})
                \quad\text{and}\quad
                \mu = (\lambda_1, \ldots, \lambda_{m-1}, s, s, \underbrace{1, \ldots, 1}_{t - s}).
            \end{equation*}
    \end{enumerate}
\end{theorem}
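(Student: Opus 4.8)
The plan is to identify the $\preceq$-covers with two explicit \emph{moves} on an integer partition $\lambda$: the \emph{type-(1) move}, which merges two parts of $\lambda$ that are not both $1$, and the \emph{type-(2) move}, which — when $\lambda = (\lambda_1, \ldots, \lambda_{m-1}, s, 1^t)$ with $\lambda_{m-1} \geq s \geq 2$ and $t \geq s$ — glues $s$ of the parts equal to $1$ into a single new part of size $s = \smallest_\lambda$, producing $(\lambda_1, \ldots, \lambda_{m-1}, s, s, 1^{t-s})$. These are exactly the relations in (1) and (2), so the content of the theorem is: every move is a $\preceq$-cover, and every $\preceq$-cover is a move.

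\emph{Moves are covers.} A type-(1) move is a $\preceq$-cover by Lemma~\ref{merge-non-singleton-blocks}. For a type-(2) move, Lemma~\ref{merge-singleton-blocks} (with its $\lambda_m$ taken to be $s$ and its $s$ taken to be $s$, so its hypothesis $s \geq \lambda_m$ holds with equality) gives $\mu \prec \lambda$, and it remains to rule out an intermediate $\rho$ with $\mu \prec \rho \prec \lambda$. By Theorem~\ref{smallest-part-reformulation} such a $\rho$ is coarser than $\lambda$, finer than $\mu$, and satisfies $\smallest_\rho = s$ (squeezed, since $\smallest_\lambda = \smallest_\mu = s$). I would then peel off common largest parts: since $\rho$ is coarser than $\lambda$ its largest part is $\geq \lambda_1$, and since $\rho$ is finer than $\mu$ its largest part is $\leq \lambda_1$ (the largest part of $\mu$), so $\rho$ has $\lambda_1$ as a part that is merged with nothing on either side; deleting one copy of $\lambda_1$ from $\lambda$, $\rho$, $\mu$ preserves both refinement relations and the condition $\smallest_\rho = s$. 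Iterating deletes $\lambda_1, \ldots, \lambda_{m-1}$ and then the part $s$, leaving a partition $\widehat\rho$ of $t$ that refines $(s, 1^{t-s})$ and all of whose parts are $1$ or $\geq s$. The parts of $\widehat\rho$ refining the $s$-block sum to $s$ and are each $1$ or $\geq s$, so they are either $s$ singletons or one part of size $s$; hence $\widehat\rho \in \{1^t,\,(s,1^{t-s})\}$ and $\rho \in \{\lambda,\mu\}$, a contradiction.

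\emph{Covers are moves.} Suppose $\mu \precdot \lambda$ and put $s = \smallest_\lambda$. It suffices to exhibit a move $\lambda \to \nu$ with $\mu \preceq \nu$: since moves are strict and $\mu \precdot \lambda$, this forces $\nu = \mu$, so $\mu$ is of the claimed form. Fix a grouping of the parts of $\lambda$ realizing $\mu$ as a coarsening; as $\mu \neq \lambda$ some block has $\geq 2$ parts. If some multi-part block consists only of parts equal to $1$, say $\ell$ of them, then $\smallest_\mu \geq s$ forces $\ell \geq s$ (so $t \geq s$) and also forces $\lambda$ to still contain a part equal to $s$; I take $\nu$ to be the type-(2) move, choosing its new $s$-block to lie inside that block of $\ell$ ones, so that $\mu$ coarsens $\nu$ and $\smallest_\nu = s \leq \smallest_\mu$, i.e. $\mu \preceq \nu$. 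Otherwise every multi-part block contains a part $\geq 2$; picking such a block $B$, a part $a \geq 2$ in it, and a second part $b \in B$ (a $1$ if $B$ has one, else another part $\geq 2$ of $B$), the partition $\nu$ obtained by merging $a$ with $b$ is a type-(1) move with $\mu$ coarser than $\nu$, and a short case check — using $\smallest_\mu \geq s$, the fact that $\smallest_\mu > s$ whenever every copy of the part $s$ of $\lambda$ is merged away in this branch, and that no part of $\mu$ lies in $\{2,\dots,s\}$ except via a surviving $s$-part — shows $\smallest_\nu \leq \smallest_\mu$, hence $\mu \preceq \nu$.

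\emph{Main obstacle.} The delicate part is the second implication. A type-(2) move with $s \geq 3$ merges $s-1$ pairs of $1$'s, so it is not a cover of the refinement order, and one cannot read $\preceq$-covers off refinement covers. Choosing the witnessing move $\nu$ and, above all, verifying $\smallest_\nu \leq \smallest_\mu$ (equivalently $\mu \preceq \nu$) is exactly what dictates whether a type-(1) or a type-(2) move is needed; organizing this split — according to whether $\smallest_\mu = s$ or $\smallest_\mu > s$, whether the chosen block $B$ contains a $1$, and the multiplicity of $s$ in $\lambda$ — is where the work lies, with Theorem~\ref{smallest-part-reformulation} handling the remaining bookkeeping.
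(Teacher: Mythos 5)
Your proposal is correct and follows essentially the same strategy as the paper: Lemmas~\ref{merge-non-singleton-blocks} and~\ref{merge-singleton-blocks} show the two kinds of relations hold, Theorem~\ref{smallest-part-reformulation} rules out intermediates to make them covers, and conversely a grouping of the parts of $\lambda$ realizing $\mu$ is split into the same two cases (an all-ones multi-part block exists or not) to produce a first merge step $\nu$ with $\mu\preceq\nu\prec\lambda$, forcing $\mu=\nu$. The only differences are cosmetic improvements: your ``peeling'' argument makes rigorous the paper's terse assertion about the form of an intermediate partition in the type-(2) case, and in the converse direction you verify $\mu\preceq\nu$ directly via the smallest-part criterion rather than building the full chain down to $\mu$ as the paper does.
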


\subsection{Proof that the relations in Theorem~\ref{cover-relations} are cover relations}

It follows directly from Lemma~\ref{merge-non-singleton-blocks} that the first
form of the relations in Theorem~\ref{cover-relations} are cover relations.

Suppose there are integers $t \geqslant s \geqslant 2$ and $\lambda_{m-1} \geqslant s$ such that
\begin{equation*}
    \lambda = (\lambda_1, \ldots, \lambda_{m-1}, s, 1^t)
    \quad\text{and}\quad
    \mu = (\lambda_1, \ldots, \lambda_{m-1}, s, s, 1^{t - s}).
\end{equation*}
By Lemma~\ref{merge-singleton-blocks}, it follows that $\mu \prec \lambda$.
If this is not a cover relation, then there exists $\nu$ such that $\mu \prec \nu \prec \lambda$.
Then $\nu = (\lambda_1, \ldots, \lambda_{m-1}, s, r, 1^{t-r})$ with $1 < r < s$.
By Theorem~\ref{smallest-part-reformulation}, the smallest part of $\nu$ that is not equal to
$1$ (this is $r$) is greater than or equal to the smallest part of $\lambda$ that is not
equal to $1$ (which is $s$). This implies $r \geqslant s > r$, a contradiction.

\subsection{Proof that all cover relations for $\preceq$ are of the form specified in Theorem~\ref{cover-relations}}

We begin by introducing some notation.
For $\lambda = (\lambda_1, \ldots, \lambda_\ell)$ and $I \subseteq [\ell]$, define
\begin{equation*}
    \lambda_I = \sum_{i \in I} \lambda_i
    \qquad\text{and}\qquad
    \lambda^I = (\lambda_i : i \in I) \vdash \lambda_I.
\end{equation*}
If $\mu < \lambda$ in refinement order, then we can write
\begin{equation}
    \label{mu-from-lambda}
    \mu = \mathsf{sort}(\lambda_{I_1}, \lambda_{I_2}, \ldots, \lambda_{I_d})
    \quad\text{for some~}
    \{I_1, \ldots, I_d\} \vdash [\ell].
\end{equation}
Furthermore, we order the sets $I_1, \ldots, I_d$ so that
\begin{gather*}
    \max(\lambda^{I_1}) \geqslant \cdots \geqslant \max(\lambda^{I_d})
    \shortintertext{and}
    \lambda_{I_j} \geqslant \lambda_{I_{j+1}} \text{~if~} \max(\lambda^{I_j}) = \max(\lambda^{I_{j+1}}).
\end{gather*}
Note that this expression is not necessarily unique.\\

\begin{example}
    For $\lambda = (6,5,4,4,3,2,1,1,1)$
    and $\mu = (12, 12, 3)$, we can take
    \begin{gather*}
        \lambda^{\{1,3,6\}} = (6, 4, 2),
        \quad
        \lambda^{\{2,4,7,8,9\}} = (5, 4, 1, 1, 1),
        \quad
        \lambda^{\{5\}} = (3),
        \shortintertext{or}
        \lambda^{\{1,4,6\}} = (6, 4, 2),
        \quad
        \lambda^{\{2,3,5\}} = (5, 4, 3),
        \quad
        \lambda^{\{7,8,9\}} = (1, 1, 1).
    \end{gather*}
\end{example}

We are now ready to prove that all cover relations for $\preceq$ are of the form specified in Theorem~\ref{cover-relations}.
Suppose $\mu \precdot \lambda$.
Then $\mu$ is coarser than $\lambda$ in refinement order, so there exists a
way of merging the blocks of $\lambda$ to obtain $\mu$,
as in \eqref{mu-from-lambda}.

\medskip
\noindent
\textbf{Case 1: Suppose every $\lambda^{I_j}$ in \eqref{mu-from-lambda} contains a part greater than $1$.}

\noindent
Construct a chain by merging parts of $\lambda$ as follows
(\cf Example~\ref{example-construct-chain-case-1}):
\begin{itemize}
    \item
        if $\lambda^{I_d} = (\lambda_{i_1} \geqslant \cdots \geqslant \lambda_{i_t})$,
        replace $\lambda_{i_1}$ and $\lambda_{i_2}$ by $\lambda_{i_1} + \lambda_{i_2}$;
    \item
        then replace $\lambda_{i_1} + \lambda_{i_2}$ and $\lambda_{i_3}$ by
        $\lambda_{i_1} + \lambda_{i_2} + \lambda_{i_3}$; and so on.
\end{itemize}
Proceed similarly with $I_{d-1}, \ldots, I_{1}$.
Since every step involves merging with a block of size greater than $1$,
we obtain a $\preceq$-chain from $\mu$ to $\lambda$
by Lemma~\ref{merge-non-singleton-blocks}.
Since $\mu \precdot \lambda$, it follows that $d = 1$ and
$\mu$ is obtained from $\lambda$ by merging two blocks that are not both $1$.

\begin{example}
    \label{example-construct-chain-case-1}
    Let $\lambda = (6,5,4,4,3,2,1,1,1)$ and $\mu = (12,12,3)$.
    For
    \begin{gather*}
        \lambda^{\{1,3,6\}} = (6, 4, 2),
        \quad
        \lambda^{\{2,4,7,8,9\}} = (5, 4, 1, 1, 1),
        \quad
        \lambda^{\{5\}} = (3),
    \end{gather*}
    one first merges $\lambda_2$ and $\lambda_4$;
    then $\lambda_2, \lambda_4$, and $\lambda_7$; and so on:
    \begin{equation*}
        \def\topnum#1#2{\overset{\text{\tiny\color{gray}#1}}{#2}}
        \begin{multlined}
            (\topnum{1}{6},\topnum{2}{5},\topnum{3}{4},\topnum{4}{4},\topnum{5}{3},\topnum{6}{2},\topnum{7}{1},\topnum{8}{1},\topnum{9}{1})
            \succ (\topnum{2,4}{9},\topnum{1}{6},\topnum{3}{4},\topnum{5}{3},\topnum{6}{2},\topnum{7}{1},\topnum{8}{1},\topnum{9}{1})
            \succ (\topnum{2,4,7}{10},\topnum{1}{6},\topnum{3}{4},\topnum{5}{3},\topnum{6}{2},\topnum{8}{1},\topnum{9}{1})
            \\
            \succ (\topnum{2,4,7,8}{11},\topnum{1}{6},\topnum{3}{4},\topnum{5}{3},\topnum{6}{2},\topnum{9}{1})
            \succ (\topnum{2,4,7,8,9}{12},\topnum{1}{6},\topnum{3}{4},\topnum{5}{3},\topnum{6}{2})
            \succ (\topnum{2,4,7,8,9}{12},\topnum{1,3}{10},\topnum{5}{3},\topnum{6}{2})
            \succ (12,12,3).
        \end{multlined}
    \end{equation*}
\end{example}

\medskip
\noindent
\textbf{Case 2: Suppose $\lambda^{I_d} = 1^r$ in \eqref{mu-from-lambda} with $r > 1$.}
By Theorem~\ref{smallest-part-reformulation},
\begin{equation*}
    r = |\lambda^{I_{d}}| \geqslant \smallest_\mu \geqslant \smallest_\lambda =: s.
\end{equation*}
We construct a $\preceq$-chain as follows.
By hypothesis,
$\lambda = (\lambda_1, \ldots, \lambda_{m-1}, s, 1^t)$
with $t \geqslant r$.
By Lemma~\ref{merge-singleton-blocks},
since $\lambda$ contains $s$, merging $s$ ones yields the following relation
\begin{equation*}
    \lambda = (\lambda_1, \ldots, \lambda_{m-1}, s, 1^t)
    \succ
    (\lambda_1, \ldots, \lambda_{m-1}, s, s, 1^{t - s}).
\end{equation*}
Since $s \geqslant 2$, merging $s$ with $1$ is
a $\preceq$-relation by Lemma~\ref{merge-non-singleton-blocks}:
\begin{equation*}
    (\lambda_1, \ldots, \lambda_{m-1}, s, s, 1^{t - s})
    \succ
    \mathsf{sort}(\lambda_1, \ldots, \lambda_{m-1}, s + 1, s, 1^{t - s - 1}).
\end{equation*}
Continuing to merge the new part with $1$s, we get
the chain
\begin{equation*}
    \begin{multlined}
        \mathsf{sort}(\lambda_1, \ldots, \lambda_{m-1}, s + 1, s, 1^{t - (s + 1)})
        \succ
        \mathsf{sort}(\lambda_1, \ldots, \lambda_{m-1}, s + 2, s, 1^{t - (s + 2)})
        \\
        \succ
        \cdots
        \succ
        \mathsf{sort}(\lambda_1, \ldots, \lambda_{m-1}, r, s, 1^{t - r})
        =: \nu^{(d)}.
    \end{multlined}
\end{equation*}

\begin{example}
    Consider $\lambda = (6,5,4,3,1,1,1,1)$, $\mu = (10,8,4)$,
    and
    \begin{gather*}
        \lambda^{I_1} = (6, 4),
        \quad
        \lambda^{I_2} = (5, 3),
        \quad
        \lambda^{I_3} = (1, 1, 1, 1).
    \end{gather*}
    One obtains the chain
    \begin{equation*}
        \begin{multlined}
            (6,5,4,3,\underline{1,1,1},1)
            \succ (6,5,4,3,\underline{3,1})
            \succ ({6},5,{4},4,3)
            =: \nu^{(3)}.
        \end{multlined}
    \end{equation*}
\end{example}

If $\lambda^{I_{d-1}}$ is also of the form $1^{r'}$, then $r' \geqslant r$
because $I_1, \ldots, I_d$ are ordered so that
$\lambda_{I_j} \geqslant \lambda_{I_{j+1}}$
when $\max(\lambda^{I_j}) = \max(\lambda^{I_{j+1}})$.
Therefore, we can apply the above procedure to $I_{d-1}$.
Continuing in this way, we obtain a chain
\begin{equation*}
    \nu^{(a)} \prec \nu^{(a+1)} \prec \cdots \prec \nu^{(d)}
    \prec (\lambda_1, \ldots, \lambda_{m-1}, s, s, 1^{t-s}) \precdot \lambda,
\end{equation*}
where $\nu^{(j)}$ is obtained from $\nu^{(j+1)}$ by merging the parts in $\lambda^{I_j}$;
and $a$ is such that $\max(\lambda^{I_{a}}) = 1$ and either $a = 1$ or $\max(\lambda^{I_{a-1}}) > 1$.

If $a = 1$, then $\nu^{(a)} = \mu$.
In the other case, proceed as in \emph{Case 1} to merge
the parts of $\lambda^{I_{a-1}}, \ldots, \lambda^{I_{1}}$
(i.e., starting with the maximal element in $\lambda^{I_{j}}$).
Thus, in both cases we obtain a chain for $\preceq$ of the form
\begin{equation*}
    \mu = \nu^{(1)} \prec \cdots \prec \nu^{(a)} \prec \cdots \prec \nu^{(d)}
    \prec (\lambda_1, \ldots, \lambda_{m-1}, s, s, 1^{t-s}) \prec \lambda.
\end{equation*}
Since $\mu \precdot \lambda$, this is a contradiction unless
$\mu = (\lambda_1, \ldots, \lambda_{m-1}, s, s, 1^{t-s})$.

\section{The submonoids of $\UU_k$ containing $\sym_k$}
\label{section:submonoids}

In this section, we prove that the lattice of submonoids of $\UU_k$ containing $\sym_k$ is distributive. 
We begin by showing that these submonoids are unions of $\J$-classes of $\UU_k$.

\begin{lemma}
    \label{lemma-union-of-j-classes}
    If $S$ is a submonoid of $\UU_k$ that contains $\sym_k$,
    then $S$ is a union of $\J$-classes of $\UU_k$.
\end{lemma}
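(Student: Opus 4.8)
The plan is to show that if $\pi \in S$ and $\mathsf{type}(\pi) = \mathsf{type}(\pi')$ for some $\pi' \in \UU_k$, then $\pi' \in S$; this immediately gives that $S$ is a union of $\mathscr{J}$-classes, since the $\mathscr{J}$-classes of $\UU_k$ are exactly the sets $J_\mu = \{\pi : \mathsf{type}(\pi) = \mu\}$. The key structural fact to exploit is property~(3) from Section~\ref{section:notation}: every element of $\UU_k$ can be written as $\tau e_\pi$ with $\tau \in \sym_k$ and $\pi \vdash [k]$, together with the description $J_\mu = \{\sigma e_\gamma \tau : \sigma, \tau \in \sym_k\}$ for any fixed $\gamma$ of type $\mu$.

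First I would take an arbitrary element $x \in S$ and write $x = \tau e_\pi$ for some $\tau \in \sym_k$ and some set partition $\pi$ of $[k]$. Since $\sym_k \subseteq S$ and $S$ is closed under multiplication, $\tau^{-1} x = e_\pi \in S$. Thus $S$ contains the idempotent $e_\pi$ for every $\pi$ arising as the "shape" of an element of $S$. Next, for any $\sigma, \tau \in \sym_k$, the product $\sigma e_\pi \tau$ lies in $S$ (again by closure under multiplication, using $\sigma, \tau \in \sym_k \subseteq S$). Using the conjugation relations in property~(2), $\sigma e_\pi \tau = \sigma \tau e_{\tau(\pi)}$, and as $\sigma, \tau$ range over $\sym_k$ this produces all elements of $\UU_k$ whose associated set partition is in the $\sym_k$-orbit of $\pi$ — i.e., all elements of $\UU_k$ of type $\mathsf{type}(\pi)$. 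Hence the entire $\mathscr{J}$-class $J_{\mathsf{type}(\pi)}$ is contained in $S$.

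Since $x = \tau e_\pi$ was an arbitrary element of $S$ and $\mathsf{type}(x) = \mathsf{type}(e_\pi) = \mathsf{type}(\pi)$, this shows that whenever $x \in S$ we have $J_{\mathsf{type}(x)} \subseteq S$, so $S = \bigcup_{x \in S} J_{\mathsf{type}(x)}$ is a union of $\mathscr{J}$-classes of $\UU_k$.

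I do not expect any serious obstacle here; the only point requiring a little care is making sure that the conjugation/absorption identities for idempotents (property~(2)) are applied in the right direction so that $\{\sigma e_\pi \tau : \sigma,\tau \in \sym_k\}$ is genuinely all of $J_{\mathsf{type}(\pi)}$, which is precisely the last displayed identity for $J_\mu$ recorded at the end of Section~\ref{section:notation}. Everything else is a direct consequence of $S$ being a monoid containing $\sym_k$.
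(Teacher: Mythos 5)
Your proposal is correct and follows essentially the same route as the paper's proof: write an element of $S$ as $\tau e_\pi$, use $\sym_k\subseteq S$ to strip the permutation and conclude $e_\pi\in S$, then multiply on both sides by arbitrary permutations and invoke the identity $J_\mu=\{\sigma e_\gamma\tau:\sigma,\tau\in\sym_k\}$ from Section~\ref{section:notation}. No gaps.
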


\begin{proof}
    Suppose $S$ is a submonoid of $\UU_k$ that contains $\sym_k$,
    and let $J_{\lambda}$ be a $\J$-class of $\UU_k$.
    If $S \cap J_{\lambda} \neq \emptyset$, then
    there exist $u, v \in \sym_k$ and $\pi \vdash [k]$ of type
    $\lambda$ such that $u e_{\pi} v \in S$.
    Since $u^{-1}, v^{-1} \in \sym_k \subseteq S$, we have
    \begin{equation*}
        e_{\pi} = u^{-1} (u e_{\pi} v) v^{-1} \in S,
    \end{equation*}
    from which it follows that $\sigma e_{\pi} \tau \in S$
    for all $\sigma, \tau \in \sym_k$.
    Consequently,
    \begin{equation*}
        J_\lambda = \{ \sigma e_{\pi} \tau : \sigma, \tau \in \sym_k \}
        \subseteq S.
        \qedhere
    \end{equation*}
\end{proof}

\begin{prop}
    \label{unitary-submonoid-with-one-e-pi}
    Let $\langle \sym_k, e_{\pi} \rangle$ be the submonoid of $\UU_k$ generated by $\sym_k$ and $e_{\pi}$.
    \begin{enumerate}
        \item
            For $\gamma \vdash [k]$ with $\mathsf{type}(\gamma) \neq 1^k$, we have
            $e_{\gamma} \in \langle \sym_k, e_{\pi} \rangle$ if and only if $\mathsf{type}(\gamma) \preceq \mathsf{type}(\pi)$.

        \item The monoid
            $\langle \sym_k, e_{\pi} \rangle$ is the union of $\sym_k$ and $\J$-classes $J_{\mu}$
            with $\mu \preceq \mathsf{type}(\pi)$:
            \begin{equation*}
                \langle \sym_k, e_{\pi} \rangle = \sym_k \cup \bigcup_{\mu \preceq \mathsf{type}(\pi)} J_\mu.
            \end{equation*}
    \end{enumerate}
\end{prop}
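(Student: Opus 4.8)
The plan is to reduce both statements to a single structural description of $\langle \sym_k, e_\pi \rangle$. Concretely, I would first prove the \emph{normal form}
\begin{equation*}
    \langle \sym_k, e_\pi \rangle = \sym_k \;\cup\; \bigl\{\, \sigma\, e_{u_1(\pi) \vee \cdots \vee u_\ell(\pi)} \;:\; \ell \geqslant 1,\ \sigma, u_1, \ldots, u_\ell \in \sym_k \,\bigr\}.
\end{equation*}
The inclusion ``$\supseteq$'' is immediate from properties (1) and (2) of the idempotents, since $e_{u_1(\pi) \vee \cdots \vee u_\ell(\pi)} = e_{u_1(\pi)} \cdots e_{u_\ell(\pi)} = (u_1^{-1} e_\pi u_1) \cdots (u_\ell^{-1} e_\pi u_\ell)$. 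For ``$\subseteq$'' it suffices to show that the set $N$ on the right is a submonoid of $\UU_k$ containing the generators $\sym_k$ and $e_\pi$. Containment of the generators is clear, and closure under products reduces to the cases $\sigma \cdot (\tau e_\rho)$, $(\tau e_\rho) \cdot \sigma$ and $(\sigma e_\rho)(\tau e_{\rho'})$, each of which is handled using the commutation relation $e_\rho\,\tau = \tau\, e_{\tau(\rho)}$ and the join rule $e_\alpha e_\beta = e_{\alpha \vee \beta}$, together with the observation that a permutation carries a join of $\sym_k$-translates of $\pi$ to another such join.

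Granting the normal form, part (1) follows quickly. For ``$\Leftarrow$'', I would unpack $\mathsf{type}(\gamma) \preceq \mathsf{type}(\pi)$: there exist $\pi_0, \ldots, \pi_\ell \vdash [k]$ of type $\mathsf{type}(\pi)$ with $\pi_0 \vee \cdots \vee \pi_\ell$ of type $\mathsf{type}(\gamma)$. Since $\sym_k$ acts transitively on set partitions of a fixed type, write $\pi_i = u_i(\pi)$ and $\gamma = w(\pi_0 \vee \cdots \vee \pi_\ell)$ for suitable $u_i, w \in \sym_k$; then $\gamma = (w u_0)(\pi) \vee \cdots \vee (w u_\ell)(\pi)$, so $e_\gamma$ lies in $N$, hence in $\langle \sym_k, e_\pi \rangle$. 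For ``$\Rightarrow$'', because $\mathsf{type}(\gamma) \neq 1^k$ the element $e_\gamma$ is not a permutation, so the normal form gives $e_\gamma = \sigma\, e_{u_1(\pi) \vee \cdots \vee u_\ell(\pi)}$; property (4) of the idempotents forces $\gamma = u_1(\pi) \vee \cdots \vee u_\ell(\pi)$, a join of set partitions of type $\mathsf{type}(\pi)$, which is precisely the definition of $\mathsf{type}(\gamma) \preceq \mathsf{type}(\pi)$.

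For part (2), I would argue both inclusions. ``$\supseteq$'': first $\sym_k \subseteq \langle \sym_k, e_\pi \rangle$; and for $\mu \preceq \mathsf{type}(\pi)$, the Remark (that $1^k$ is incomparable to all other partitions) shows either $\mu = 1^k$, whence $J_{1^k} = \sym_k$ is already included, or $\mu \neq 1^k$, in which case choosing $\gamma \vdash [k]$ of type $\mu$ gives $e_\gamma \in \langle \sym_k, e_\pi \rangle$ by part (1), and then $J_\mu = \{ \sigma e_\gamma \tau : \sigma, \tau \in \sym_k \} \subseteq \langle \sym_k, e_\pi \rangle$ since $\langle \sym_k, e_\pi \rangle$ is a monoid containing $\sym_k$. ``$\subseteq$'': take $x \in \langle \sym_k, e_\pi \rangle$; if $x \in \sym_k = J_{1^k}$ we are done, and otherwise the normal form gives $x = \sigma e_\rho$ with $\rho$ a join of $\sym_k$-translates of $\pi$, so $\mathsf{type}(\rho) \preceq \mathsf{type}(\pi)$ and $x \in J_{\mathsf{type}(\rho)}$. (Alternatively, one may invoke Lemma~\ref{lemma-union-of-j-classes} to know $\langle \sym_k, e_\pi \rangle$ is a union of $\J$-classes and then use only part (1).)

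The step I expect to be the main obstacle is the normal form itself: it requires careful bookkeeping with the commutation and join identities so that every product of normal-form elements is visibly again of normal form, and in particular one must consistently use that applying a permutation to a join of $\sym_k$-translates of $\pi$ produces another join of $\sym_k$-translates of $\pi$. A secondary subtlety is the ``$\Leftarrow$'' direction of part (1): the definition of $\preceq$ only yields a join having the same \emph{type} as $\gamma$, not $\gamma$ on the nose, so transitivity of the $\sym_k$-action is needed to realize $e_\gamma$ exactly.
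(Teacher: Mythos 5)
Your proposal is correct and follows essentially the same route as the paper: both reduce everything to the normal form $\sigma\, e_{u_1(\pi)\vee\cdots\vee u_\ell(\pi)}$, use property (4) of the idempotents for the forward direction of part (1), and use transitivity of the $\sym_k$-action on set partitions of a fixed type for the converse. The only cosmetic difference is that the paper derives the normal form by a telescoping substitution $u_i = v_i v_{i+1}^{-1}$ in a word in the generators, whereas you verify directly that the candidate set is a submonoid containing the generators; both are routine.
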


\begin{proof}
    Every element of $\langle \sym_k, e_{\pi} \rangle$ is either in $\sym_k$ or is a product of the form
    \begin{align*}
        u_0 e_{\pi} u_1 e_{\pi} \cdots
        u_{\ell-2} e_{\pi} u_{\ell-1} e_{\pi} u_\ell e_{\pi} u_{\ell+1}
        \qquad\text{with $u_0, u_1, \ldots, u_{\ell+1} \in \sym_k$ for some $\ell \geqslant 0$}.
    \end{align*}
    Since $\mathsf{type}(\gamma) \neq 1^k$, then $e_\gamma$ is of this form.  Setting
    $v_{\ell+1} = u_{\ell+1}$ and $v_{i} = u_{i} v_{i+1}$ for $0 \leqslant i \leqslant \ell$
    so that $u_{i} = v_{i} v_{i + 1}^{-1}$,
    the above becomes
    \begin{equation*}
        \begin{aligned}
            & v_{0}
            (v_{1}^{-1} e_{\pi} v_{1})
            (v_{2}^{-1} e_{\pi} v_{2})
            \cdots
            (v_{\ell-1}^{-1} e_{\pi} v_{\ell-1})
            (v_{\ell}^{-1} e_{\pi} v_{\ell})
            (v_{\ell+1}^{-1} e_{\pi} v_{\ell+1})
            \\
            & = v_{0} \,
            e_{v_{1}(\pi)}
            e_{v_{2}(\pi)}
            \cdots
            e_{v_{\ell}(\pi)}
            e_{v_{\ell+1}(\pi)}
            \\
            & = v_{0} \,
            e_{v_{1}(\pi) \vee v_{2}(\pi) \vee \cdots \vee v_{\ell}(\pi) \vee v_{\ell+1}(\pi)}.
        \end{aligned}
    \end{equation*}
    If $e_\gamma = v_{0} \,e_{v_{1}(\pi) \vee v_{2}(\pi) \vee \cdots \vee v_{\ell}(\pi) \vee v_{\ell+1}(\pi)}$, then
    $\gamma = v_{1}(\pi) \vee \cdots \vee v_{\ell+1}(\pi)$ by item \eqref{item:set partition same} in Section~\ref{section:notation}.
    Hence $\gamma$ is a join of set partitions of type $\mathsf{type}(\pi)$;
    that is, $\mathsf{type}(\gamma) \preceq \mathsf{type}(\pi)$.

    Conversely, if $\mathsf{type}(\gamma) \preceq \mathsf{type}(\pi)$,
    then $\gamma = \pi_{1} \vee \cdots \vee \pi_{\ell+1}$ for some set partitions
    $\pi_{1}, \ldots, \pi_{\ell+1}$ with $\mathsf{type}(\pi_{i}) = \mathsf{type}(\pi)$.
    Each $\pi_{i}$ is equal to $v_i(\pi)$ for some $v_i \in \sym_k$,
    and so $\gamma = v_{1}(\pi) \vee \cdots \vee v_{\ell+1}(\pi)$.
    Thus,
    \begin{equation*}
        e_\gamma = e_{v_{1}(\pi) \vee \cdots \vee v_{\ell+1}(\pi)}
        = v_{1} e_{\pi} v_{1}^{-1} \cdots v_{\ell+1} e_{\pi} v_{\ell+1}^{-1} \in \langle \sym_k, e_{\pi} \rangle.
        \qedhere
    \end{equation*}
\end{proof}

\begin{theorem}
    \label{union-is-submonoid}
    Let $S_1$ and $S_2$ be two submonoids of $\UU_k$ that contain $\sym_k$.
    Then $S_1 \cup  S_2$ is also a submonoid of $\UU_k$.
\end{theorem}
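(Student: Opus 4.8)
The goal is to show $S_1 \cup S_2$ is closed under the monoid product. By Lemma~\ref{lemma-union-of-j-classes}, both $S_1$ and $S_2$ are unions of $\mathscr{J}$-classes, so each is determined by a set of partitions: write $S_i = \sym_k \cup \bigcup_{\mu \in D_i} J_\mu$ where $D_i \subseteq \mathcal{P}_k \setminus \{1^k\}$. A key point, which I would extract first from Proposition~\ref{unitary-submonoid-with-one-e-pi}, is that each $D_i$ is a downset for $\preceq$: indeed if $J_\lambda \subseteq S_i$ then $e_\pi \in S_i$ for any $\pi$ of type $\lambda$ (as in the proof of Lemma~\ref{lemma-union-of-j-classes}), hence $\langle \sym_k, e_\pi\rangle \subseteq S_i$, and by Proposition~\ref{unitary-submonoid-with-one-e-pi}(2) this forces $J_\mu \subseteq S_i$ for all $\mu \preceq \lambda$. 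So $D_1 \cup D_2$ is again a downset, and I will show directly that $\sym_k \cup \bigcup_{\mu \in D_1 \cup D_2} J_\mu$ is closed under multiplication.

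**The main step.** Take two elements $x, y \in S_1 \cup S_2$ and show $xy \in S_1 \cup S_2$. If either lies in $\sym_k$ the product stays in the same $S_i$ (using property~(1) of idempotents together with closure of each $S_i$), so assume $x = \sigma e_\pi \tau$ and $y = \sigma' e_{\pi'} \tau'$ with $\mathsf{type}(\pi) = \la \in D_1 \cup D_2$ and $\mathsf{type}(\pi') = \la' \in D_1 \cup D_2$; without loss of generality $\la \in D_1$ and $\la' \in D_2$ (the case where both lie in the same $D_i$ is handled by closure of that single submonoid). Using property~(2) of idempotents, $xy = \sigma e_\pi (\tau \sigma') e_{\pi'} \tau' = \sigma\, e_\pi\, e_{\rho}\, (\tau\sigma'\tau')$ where $\rho = (\tau\sigma')(\pi')$ has type $\la'$, and then property~(1) gives $xy = \sigma\, e_{\pi \vee \rho}\, (\tau\sigma'\tau')$. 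So $\mathsf{type}(xy) = \mathsf{type}(\pi \vee \rho)$, the type of a join of a set partition of type $\la$ and one of type $\la'$. The crux is therefore: \emph{the join of a set partition of type $\la \in D_1$ and one of type $\la' \in D_2$ has type in $D_1 \cup D_2$.}

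**Resolving the crux.** Let $\nu = \mathsf{type}(\pi \vee \rho)$; if $\nu = 1^k$ then $\pi = \rho = $ the all-singletons partition, forcing $\la = \la' = 1^k$, excluded. So $\nu \neq 1^k$, and I claim $\nu \preceq \la$ or $\nu \preceq \la'$ — one of which suffices since $D_1, D_2$ are downsets. Using the characterization in Theorem~\ref{smallest-part-reformulation}, $\nu$ is coarser than both $\la$ and $\la'$ (merging blocks), so it remains to check $\smallest_\nu \geq \smallest_\la$ or $\smallest_\nu \geq \smallest_{\la'}$. Equivalently, negate both: if $\smallest_\nu < \smallest_\la$ and $\smallest_\nu < \smallest_{\la'}$, I derive a contradiction. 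A block $B$ of $\pi \vee \rho$ of size $\smallest_\nu$ is a union of blocks of $\pi$ and of blocks of $\rho$; since $\smallest_\nu$ is below the smallest non-singleton part of each, every block of $\pi$ or $\rho$ meeting $B$ that is not a singleton would have size $> \smallest_\nu$, impossible as it sits inside $B$; hence all blocks of $\pi$ and of $\rho$ contained in $B$ are singletons. But then $B$ is a union of singleton blocks of $\pi$ that are connected through singleton blocks of $\rho$ — exactly the situation analyzed in the proof of Theorem~\ref{smallest-part-reformulation}, where such a chain $B_1 \sim_{C_1} B_2 \sim_{C_2} \cdots$ forces each connecting $C_i$ to already equal the whole of $B$, so $\pi$ (or $\rho$) alone has a block of size $\smallest_\nu < \smallest_\la$, contradicting $\la \neq 1^k$ having smallest non-one part $\smallest_\la$. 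This contradiction finishes the proof.

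**Expected obstacle.** The genuinely delicate part is the last paragraph: making the "alternating chain of singletons" argument fully rigorous in the two-partition setting. The subtlety is that the block $B$ is built by alternately merging along $\pi$-blocks and $\rho$-blocks, and one must argue that the non-singleton blocks of $\pi$ and $\rho$ cannot participate. I would mirror the bookkeeping in the $(\Rightarrow)$ direction of Theorem~\ref{smallest-part-reformulation} as closely as possible, perhaps by refactoring that argument into a standalone lemma ("if a join of set partitions $\pi_0, \dots, \pi_\ell$ has a block of size $s$ smaller than the smallest non-singleton part of every $\mathsf{type}(\pi_j)$, then some $\pi_j$ already has a block of size $s$") and then applying it to the multiset $\{\pi, \rho\}$. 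Once that lemma is in place, the rest of the argument here is routine diagram-algebra manipulation.
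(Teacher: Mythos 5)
Your proposal is correct and follows essentially the same route as the paper's proof: write the product as a single twisted idempotent $e_{\pi\vee\rho}$, reduce via Proposition~\ref{unitary-submonoid-with-one-e-pi} and Theorem~\ref{smallest-part-reformulation} to showing $\mathsf{type}(\pi\vee\rho)\preceq\lambda$ or $\mathsf{type}(\pi\vee\rho)\preceq\lambda'$, and rule out $\smallest_\nu<\smallest_\lambda$ together with $\smallest_\nu<\smallest_{\lambda'}$ by the singleton-block analysis borrowed from the proof of Theorem~\ref{smallest-part-reformulation}. One remark: your symmetric use of both inequalities actually gives a shortcut you did not notice --- once every block of \emph{both} $\pi$ and $\rho$ contained in $B$ is a singleton, no merging can occur and $B$ is itself a singleton, contradicting $|B|=\smallest_\nu\geqslant 2$ directly, so the delicate chain argument you flag as the expected obstacle is not needed there (the paper avoids this by using only $\smallest_\mu<\smallest_\lambda$ to get singletons on one side and then running the chain argument to force a small block in $\gamma$).
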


\begin{proof}
    Let $\tau e_{\pi} \in S_1$ and $\sigma e_{\gamma} \in S_2$.
    Then
    \begin{equation*}
        \tau e_{\pi} \cdot \sigma e_{\gamma}
        = \tau \sigma e_{\sigma(\pi)} e_{\gamma}
        = (\tau \sigma) e_{\sigma(\pi) \vee \gamma}.
    \end{equation*}
    We will show that $e_{\sigma(\pi) \vee \gamma}$ is contained in
    $S_1$ or $S_2$.

    Let
    \begin{gather*}
        \mu := \mathsf{type}(\sigma(\pi) \vee \gamma),
        \quad \lambda := \mathsf{type}(\pi),
        \quad \nu := \mathsf{type}(\gamma).
    \end{gather*}
    If $\lambda = 1^k$,
    then $\sigma(\pi) \vee \gamma = \gamma$
    because $\sigma(\pi) = \pi$ is the unique set partition of type $1^k$,
    from which it follows that $\tau \sigma e_{\sigma(\pi) \vee \gamma} \in S_2$.
    Similarly, if $\nu = 1^k$, then $\sigma(\pi) \vee \gamma = \sigma(\pi)$ and
    $\tau e_{\pi} \sigma e_{\sigma(\pi)} \in S_1$.

    Suppose $\mu, \lambda \neq 1^k$.
    Since we want to show that
    $e_{\sigma(\pi) \vee \gamma} \in S_1 \cup S_2$,
    it suffices to prove
    $e_{\sigma(\pi) \vee \gamma} \in
    \langle \sym_k, e_{\pi} \rangle \cup \langle \sym_k, e_{\gamma} \rangle$
    because
    $\langle \sym_k, e_{\pi} \rangle \subseteq S_1$
    and $\langle \sym_k, e_{\gamma} \rangle \subseteq S_2$.
    By Proposition~\ref{unitary-submonoid-with-one-e-pi},
    $e_{\sigma(\pi) \vee \gamma} \in \langle \sym_k, e_{\pi} \rangle$ if and only if $\mu \preceq \lambda$
    and
    $e_{\sigma(\pi) \vee \gamma} \in \langle \sym_k, e_{\gamma} \rangle$ if and only if $\mu \preceq \nu$,
    so it suffices to show
    $\mu \preceq \lambda$ or $\mu \preceq \nu$.

    Suppose $\mu \not\preceq \lambda$ and $\mu \not\preceq \nu$.
    Note that $\mu$ is coarser than $\lambda$ and $\mu$ is coarser
    than $\nu$ because $\sigma(\pi) \vee \gamma$
    is obtained by merging blocks of $\sigma(\pi)$ and by merging
    blocks of $\gamma$.
    Then by Theorem~\ref{smallest-part-reformulation}
    we have that
    $\smallest_\mu < \smallest_\nu$
    and
    $\smallest_\mu < \smallest_\lambda$.

    Since $\sigma(\pi) \vee \gamma$ has type $\mu$
    and is obtained by merging blocks in $\sigma(\pi)$,
    it follows that it contains a block of size $\smallest_\mu$ that can only
    be obtained by merging singleton blocks in $\sigma(\pi)$.
    This implies that $\gamma$ contains a block of size $\smallest_\mu$
    (\cf the proof of Theorem~\ref{smallest-part-reformulation}).
    Since $\gamma$ contains a block of size $\smallest_\mu$, it follows
    that $\nu$ has a part equal to $\smallest_\mu$,
    implying that $\smallest_\mu \geqslant \smallest_\nu$, a contradiction
    because $\smallest_\mu < \smallest_\nu$.
\end{proof}

\begin{cor}
\label{cor:submonoid characterization}
    Let $S$ be a submonoid of $\UU_k$ containing $\sym_k$
    and let $E(S)$ be its set of idempotents.
    Then
    $I := \{\mathsf{type}(\pi) : e_{\pi} \in E(S), e_\pi \neq id \}$
    is a down set for the partial order $\preceq$
    and
    $S = \sym_k \cup \bigcup_{\lambda \in I} J_\lambda$.
    
    Conversely, if $I$ is a downset of $(\mathcal{P}_k \backslash \{1^k\}, \preceq)$,
    then $\sym_k \cup \bigcup_{\lambda \in I} J_\lambda$ is a submonoid of $\UU_k$.
\end{cor}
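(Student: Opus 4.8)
The plan is to deduce the corollary from Lemma~\ref{lemma-union-of-j-classes}, Proposition~\ref{unitary-submonoid-with-one-e-pi}, and Theorem~\ref{union-is-submonoid}, which already carry all the substantive content; what remains is bookkeeping. For the forward direction, I would first apply Lemma~\ref{lemma-union-of-j-classes} to write $S = \sym_k \cup \bigcup_{\lambda \in I'} J_\lambda$, where $I' = \{\lambda \vdash k : \lambda \neq 1^k \text{ and } J_\lambda \subseteq S\}$, using that the $\J$-classes partition $\UU_k$ and that $J_{1^k} = \sym_k$. Next I would verify $I' = I$: if $\lambda \in I'$, then for any set partition $\pi$ of type $\lambda$ the idempotent $e_\pi$ lies in $J_\lambda \subseteq S$ and $e_\pi \neq \id$, because $e_\pi = \id$ exactly when $\pi$ is the all-singletons set partition, i.e. when $\mathsf{type}(\pi) = 1^k$; hence $\lambda \in I$. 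Conversely, if $e_\pi \in E(S)$ with $e_\pi \neq \id$, then $\mathsf{type}(\pi) \neq 1^k$ and $e_\pi \in S \cap J_{\mathsf{type}(\pi)}$, so Lemma~\ref{lemma-union-of-j-classes} gives $J_{\mathsf{type}(\pi)} \subseteq S$, i.e. $\mathsf{type}(\pi) \in I'$. Finally, to see $I$ is a downset: if $\lambda \in I$ and $\mu \preceq \lambda$, then $\mu \neq 1^k$ since $1^k$ is incomparable to every other partition; picking $\pi$ of type $\lambda$ we get $\langle \sym_k, e_\pi \rangle \subseteq S$ (as $e_\pi \in S$ and $\sym_k \subseteq S$), and Proposition~\ref{unitary-submonoid-with-one-e-pi}(2) yields $J_\mu \subseteq \langle \sym_k, e_\pi \rangle \subseteq S$, so $\mu \in I$.

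For the converse, given a downset $I$ of $(\mathcal{P}_k \setminus \{1^k\}, \preceq)$, I would fix for each $\lambda \in I$ a set partition $\pi_\lambda$ of type $\lambda$ and set $M_\lambda := \langle \sym_k, e_{\pi_\lambda} \rangle$. By Proposition~\ref{unitary-submonoid-with-one-e-pi}(2), $M_\lambda = \sym_k \cup \bigcup_{\mu \preceq \lambda} J_\mu$, and since $I$ is a downset ($\mu \preceq \lambda \in I$ forces $\mu \in I$, while $\lambda \preceq \lambda$), taking the union over $\lambda \in I$ gives $\bigcup_{\lambda \in I} M_\lambda = \sym_k \cup \bigcup_{\mu \in I} J_\mu$. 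If $I = \emptyset$ this is $\sym_k$, which is a submonoid; otherwise each $M_\lambda$ is a submonoid of $\UU_k$ containing $\sym_k$, and because there are only finitely many partitions of $k$, an easy induction on Theorem~\ref{union-is-submonoid} — the union of two, hence of finitely many, submonoids of $\UU_k$ containing $\sym_k$ is again such a submonoid — shows that $\bigcup_{\lambda \in I} M_\lambda = \sym_k \cup \bigcup_{\lambda \in I} J_\lambda$ is a submonoid.

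The argument is essentially routine, so there is no real conceptual obstacle; the points requiring care are the edge cases involving $1^k$ (that $J_{1^k} = \sym_k$, that $e_\pi = \id$ if and only if $\mathsf{type}(\pi) = 1^k$, and that $1^k$ never occurs inside a $\preceq$-downset because it is incomparable to every other partition), and the bootstrap from the two-fold union in Theorem~\ref{union-is-submonoid} to the finite union appearing here.
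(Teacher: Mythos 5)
Your proof is correct; all the substantive content is indeed carried by Lemma~\ref{lemma-union-of-j-classes}, Proposition~\ref{unitary-submonoid-with-one-e-pi} and Theorem~\ref{union-is-submonoid}, and your handling of the $1^k$ edge cases and of the passage from binary to finite unions is sound. The route differs from the paper's in one organizational respect: for the forward direction you take Lemma~\ref{lemma-union-of-j-classes} as the starting point, so that $S$ is immediately a union of $\J$-classes and the identity $S = \sym_k \cup \bigcup_{\lambda \in I} J_\lambda$ reduces to matching the index set $I' = \{\lambda \neq 1^k : J_\lambda \subseteq S\}$ with $I$; the downset property then comes from part (2) of the Proposition applied inside $S$. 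The paper instead never invokes Lemma~\ref{lemma-union-of-j-classes} here: it decomposes $S = \langle \sym_k, E(S)\rangle = \bigcup_{e_\pi \in E(S)} \langle \sym_k, e_\pi\rangle$ (justifying the second equality via Theorem~\ref{union-is-submonoid}) and reads off the $\J$-class decomposition from part (2) of the Proposition applied to each piece, while the downset property is obtained from part (1) by producing the idempotents $e_\gamma$ of type $\mu$ directly in $E(S)$. Your version buys a slightly shorter forward argument at the cost of the auxiliary set $I'$; the paper's version makes the forward and converse directions structurally parallel (both are unions of the monoids $\langle \sym_k, e_\pi\rangle$). You are also more explicit than the paper about two small points it leaves implicit, namely the induction needed to pass from the two-fold union of Theorem~\ref{union-is-submonoid} to a finite union, and the degenerate case $I = \emptyset$.
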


\begin{proof}
    Let us first prove that $I$ is a down set.
    Suppose $\mu \preceq \mathsf{type}(\pi)$ for some $\pi \in E(S)$.
    By Proposition~\ref{unitary-submonoid-with-one-e-pi},
    $e_{\gamma} \in \langle \sym_k, e_{\pi} \rangle$ for any $\gamma \vdash [k]$
    of type $\mu$.
    But if $e_{\gamma} \in \langle \sym_k, e_{\pi} \rangle$,
    then $e_{\gamma} \in E(S)$, and so $\mu = \mathsf{type}(\gamma) \in I$.

    Every element of $S$ is of the form $\sigma e_{\pi}$ with $\sigma \in \sym_k$
    and $e_{\pi} \in E(S)$,
    so $S = \langle \sym_{k}, E(S) \rangle$.
    By Theorem~\ref{union-is-submonoid},
    $\langle \sym_{k}, E(S) \rangle = \bigcup_{e_{\pi} \in E(S)} \langle \sym_k, e_{\pi} \rangle$,
    since both are the smallest submonoids containing $\sym_k$ and $E(S)$.
    By Proposition~\ref{unitary-submonoid-with-one-e-pi},
    \begin{equation*}
        S = \bigcup_{e_{\pi} \in E(S)} \langle \sym_k, e_{\pi} \rangle
        = \bigcup_{e_{\pi} \in E(S)} \bigg(\sym_k \cup \bigcup_{\lambda \preceq \mathsf{type}(\pi)} J_{\lambda}\bigg)
        = \sym_k \cup \bigcup_{\lambda \in I} J_{\lambda}.
    \end{equation*}
    
    Now conversely, if $I$ is a downset of $(\mathcal{P}_k \backslash \{1^k\}, \preceq)$, then by
    Proposition \ref{unitary-submonoid-with-one-e-pi} part (2),
    for each $e_\pi$ with $\mathsf{type}(\pi) \in I$,
    $\left< \sym_k, e_\pi \right> \subseteq \sym_k \cup \bigcup_{\lambda \in I} J_\lambda$ and hence
    $$\sym_k \cup \bigcup_{\lambda \in I} J_\lambda = \bigcup_{\mathsf{type}(\pi) \in I} \left< \sym_k, e_\pi \right>~.$$
    Then by Theorem \ref{union-is-submonoid}, $\sym_k \cup \bigcup_{\lambda \in I} J_\lambda$ is a submonoid of $\UU_k$.
\end{proof}

Theorem \ref{union-is-submonoid} implies that the poset of submonoids of $\UU_k$
containing $\sym_k$ under union and intersection is a distributive lattice.
The lattices for $k=4,5,6$ are presented in Figure \ref{fig:posets}.
\begin{figure}
\begin{raisebox}{.75in}{
\begin{scalebox}{.6}{
\begin{tikzpicture}[>=latex,line join=bevel,]
\node (node_0) at (45.5bp,8.5bp) [draw,draw=none] {\begin{tabular}{c}\fbox{$\emptyset$}\\24\end{tabular}};
  \node (node_1) at (45.5bp,61.5bp) [draw,draw=none] {\begin{tabular}{c}\fbox{$4$}\\25\end{tabular}};
  \node (node_2) at (18.5bp,114.5bp) [draw,draw=none] {\begin{tabular}{c}\fbox{$22$}\\43\end{tabular}};
  \node (node_3) at (73.5bp,114.5bp) [draw,draw=none] {\begin{tabular}{c}\fbox{$31$}\\41\end{tabular}};
  \node (node_4) at (45.5bp,167.5bp) [draw,draw=none] {\begin{tabular}{c}\fbox{$31, 22$}\\59\end{tabular}};
  \node (node_5) at (45.5bp,220.5bp) [draw,draw=none] {\begin{tabular}{c}\fbox{$211$}\\131\end{tabular}};
  \draw [black,->] (node_0) ..controls (45.5bp,23.805bp) and (45.5bp,34.034bp)  .. (node_1);
  \draw [black,->] (node_1) ..controls (37.767bp,77.106bp) and (32.026bp,87.95bp)  .. (node_2);
  \draw [black,->] (node_1) ..controls (53.519bp,77.106bp) and (59.473bp,87.95bp)  .. (node_3);
  \draw [black,->] (node_2) ..controls (26.233bp,130.11bp) and (31.974bp,140.95bp)  .. (node_4);
  \draw [black,->] (node_3) ..controls (65.481bp,130.11bp) and (59.527bp,140.95bp)  .. (node_4);
  \draw [black,->] (node_4) ..controls (45.5bp,182.81bp) and (45.5bp,193.03bp)  .. (node_5);
\end{tikzpicture}
}
\end{scalebox}}
\end{raisebox}
\hskip .1in
\begin{raisebox}{.55in}{
\begin{scalebox}{.5}{
\begin{tikzpicture}[>=latex,line join=bevel,]
\node (node_0) at (59.5bp,8.5bp) [draw,draw=none]  {\begin{tabular}{c}\fbox{$\emptyset$}\\120\end{tabular}};
  \node (node_1) at (59.5bp,61.5bp) [draw,draw=none]  {\begin{tabular}{c}\fbox{$5$}\\121\end{tabular}};
  \node (node_2) at (32.5bp,114.5bp) [draw,draw=none]  {\begin{tabular}{c}\fbox{$32$}\\221\end{tabular}};
  \node (node_3) at (87.5bp,114.5bp) [draw,draw=none]  {\begin{tabular}{c}\fbox{$41$}\\146\end{tabular}};
  \node (node_5) at (32.5bp,167.5bp) [draw,draw=none]  {\begin{tabular}{c}\fbox{$41, 32$}\\246\end{tabular}};
  \node (node_4) at (106.5bp,167.5bp) [draw,draw=none]  {\begin{tabular}{c}\fbox{$311$}\\346\end{tabular}};
  \node (node_7) at (106.5bp,220.5bp) [draw,draw=none]  {\begin{tabular}{c}\fbox{$32, 311$}\\446\end{tabular}};
  \node (node_6) at (28.5bp,220.5bp) [draw,draw=none]  {\begin{tabular}{c}\fbox{$221$}\\696\end{tabular}};
  \node (node_8) at (67.5bp,273.5bp) [draw,draw=none]  {\begin{tabular}{c}\fbox{$311, 221$}\\896\end{tabular}};
  \node (node_9) at (67.5bp,326.5bp) [draw,draw=none]  {\begin{tabular}{c}\fbox{$2111$}\\1496\end{tabular}};
  \draw [black,->] (node_0) ..controls (59.5bp,23.805bp) and (59.5bp,34.034bp)  .. (node_1);
  \draw [black,->] (node_1) ..controls (51.767bp,77.106bp) and (46.026bp,87.95bp)  .. (node_2);
  \draw [black,->] (node_1) ..controls (67.519bp,77.106bp) and (73.473bp,87.95bp)  .. (node_3);
  \draw [black,->] (node_2) ..controls (32.5bp,129.81bp) and (32.5bp,140.03bp)  .. (node_5);
  \draw [black,->] (node_3) ..controls (92.885bp,129.96bp) and (96.81bp,140.49bp)  .. (node_4);
  \draw [black,->] (node_3) ..controls (71.179bp,130.63bp) and (58.29bp,142.59bp)  .. (node_5);
  \draw [black,->] (node_4) ..controls (106.5bp,182.81bp) and (106.5bp,193.03bp)  .. (node_7);
  \draw [black,->] (node_5) ..controls (31.378bp,182.81bp) and (30.576bp,193.03bp)  .. (node_6);
  \draw [black,->] (node_5) ..controls (54.896bp,183.93bp) and (73.185bp,196.54bp)  .. (node_7);
  \draw [black,->] (node_6) ..controls (39.842bp,236.33bp) and (48.494bp,247.65bp)  .. (node_8);
  \draw [black,->] (node_7) ..controls (95.158bp,236.33bp) and (86.506bp,247.65bp)  .. (node_8);
  \draw [black,->] (node_8) ..controls (67.5bp,288.81bp) and (67.5bp,299.03bp)  .. (node_9);
\end{tikzpicture}}
\end{scalebox}}
\end{raisebox}
\hskip .01in
\begin{scalebox}{.45}{
\begin{tikzpicture}[>=latex,line join=bevel,]
\node (node_0) at (245.0bp,8.5bp) [draw,draw=none]  {\begin{tabular}{c}\fbox{$\emptyset$}\\720\end{tabular}};
  \node (node_1) at (245.0bp,61.5bp) [draw,draw=none]  {\begin{tabular}{c}\fbox{$6$}\\721\end{tabular}};
  \node (node_2) at (245.0bp,114.5bp) [draw,draw=none]  {\begin{tabular}{c}\fbox{$33$}\\921\end{tabular}};
  \node (node_3) at (176.0bp,114.5bp) [draw,draw=none]  {\begin{tabular}{c}\fbox{$42$}\\946\end{tabular}};
  \node (node_7) at (307.0bp,114.5bp) [draw,draw=none]  {\begin{tabular}{c}\fbox{$51$}\\757\end{tabular}};
  \node (node_5) at (148.0bp,167.5bp) [draw,draw=none]  {\begin{tabular}{c}\fbox{$42, 33$}\\1146\end{tabular}};
  \node (node_9) at (314.0bp,167.5bp) [draw,draw=none]  {\begin{tabular}{c}\fbox{$51, 33$}\\957\end{tabular}};
  \node (node_4) at (74.0bp,167.5bp) [draw,draw=none]  {\begin{tabular}{c}\fbox{$222$}\\2296\end{tabular}};
  \node (node_12) at (231.0bp,167.5bp) [draw,draw=none]  {\begin{tabular}{c}\fbox{$51, 42$}\\982\end{tabular}};
  \node (node_6) at (37.0bp,220.5bp) [draw,draw=none]  {\begin{tabular}{c}\fbox{$33, 222$}\\2496\end{tabular}};
  \node (node_14) at (129.0bp,220.5bp) [draw,draw=none]  {\begin{tabular}{c}\fbox{$51, 222$}\\2332\end{tabular}};
  \node (node_16) at (231.0bp,220.5bp) [draw,draw=none]  {\begin{tabular}{c}\fbox{$51, 42, 33$}\\1182\end{tabular}};
  \node (node_22) at (88.0bp,273.5bp) [draw,draw=none]  {\begin{tabular}{c}\fbox{$51, 33, 222$}\\2532\end{tabular}};
  \node (node_8) at (388.0bp,167.5bp) [draw,draw=none]  {\begin{tabular}{c}\fbox{$411$}\\1207\end{tabular}};
  \node (node_10) at (425.0bp,220.5bp) [draw,draw=none]  {\begin{tabular}{c}\fbox{$411, 33$}\\1407\end{tabular}};
  \node (node_13) at (333.0bp,220.5bp) [draw,draw=none]  {\begin{tabular}{c}\fbox{$42, 411$}\\1432\end{tabular}};
  \node (node_11) at (471.0bp,273.5bp) [draw,draw=none]  {\begin{tabular}{c}\fbox{$3111$}\\3807\end{tabular}};
  \node (node_18) at (374.0bp,273.5bp) [draw,draw=none]  {\begin{tabular}{c}\fbox{$42, 411, 33$}\\1632\end{tabular}};
  \node (node_20) at (471.0bp,326.5bp) [draw,draw=none]  {\begin{tabular}{c}\fbox{$42, 3111$}\\4032\end{tabular}};
  \node (node_15) at (263.0bp,273.5bp) [draw,draw=none]  {\begin{tabular}{c}\fbox{$411, 222$}\\2782\end{tabular}};
  \node (node_24) at (253.0bp,326.5bp) [draw,draw=none]  {\begin{tabular}{c}\fbox{$411, 33, 222$}\\2982\end{tabular}};
  \node (node_17) at (180.0bp,273.5bp) [draw,draw=none]  {\begin{tabular}{c}\fbox{$321$}\\4782\end{tabular}};
  \node (node_19) at (369.0bp,326.5bp) [draw,draw=none]  {\begin{tabular}{c}\fbox{$411, 321$}\\5232\end{tabular}};
  \node (node_23) at (137.0bp,326.5bp) [draw,draw=none]  {\begin{tabular}{c}\fbox{$321, 222$}\\6132\end{tabular}};
  \node (node_21) at (471.0bp,379.5bp) [draw,draw=none]  {\begin{tabular}{c}\fbox{$321, 3111$}\\7632\end{tabular}};
  \node (node_26) at (235.0bp,379.5bp) [draw,draw=none]  {\begin{tabular}{c}\fbox{$411, 321, 222$}\\6582\end{tabular}};
  \node (node_25) at (360.0bp,379.5bp) [draw,draw=none]  {\begin{tabular}{c}\fbox{$3111, 222$}\\5382\end{tabular}};
  \node (node_28) at (356.0bp,432.5bp) [draw,draw=none]  {\begin{tabular}{c}\fbox{$321, 3111, 222$}\\8982\end{tabular}};
  \node (node_27) at (242.0bp,432.5bp) [draw,draw=none]  {\begin{tabular}{c}\fbox{$2211$}\\14682\end{tabular}};
  \node (node_29) at (297.0bp,485.5bp) [draw,draw=none]  {\begin{tabular}{c}\fbox{$3111, 2211$}\\17082\end{tabular}};
  \node (node_30) at (297.0bp,538.5bp) [draw,draw=none]  {\begin{tabular}{c}\fbox{$21111$}\\22482\end{tabular}};
  \draw [black,->] (node_0) ..controls (245.0bp,23.805bp) and (245.0bp,34.034bp)  .. (node_1);
  \draw [black,->] (node_1) ..controls (245.0bp,76.805bp) and (245.0bp,87.034bp)  .. (node_2);
  \draw [black,->] (node_1) ..controls (224.22bp,77.86bp) and (207.39bp,90.3bp)  .. (node_3);
  \draw [black,->] (node_1) ..controls (263.49bp,77.709bp) and (278.22bp,89.823bp)  .. (node_7);
  \draw [black,->] (node_2) ..controls (215.07bp,131.24bp) and (189.83bp,144.51bp)  .. (node_5);
  \draw [black,->] (node_2) ..controls (265.78bp,130.86bp) and (282.61bp,143.3bp)  .. (node_9);
  \draw [black,->] (node_3) ..controls (144.45bp,131.27bp) and (117.74bp,144.63bp)  .. (node_4);
  \draw [black,->] (node_3) ..controls (167.98bp,130.11bp) and (162.03bp,140.95bp)  .. (node_5);
  \draw [black,->] (node_3) ..controls (192.32bp,130.63bp) and (205.21bp,142.59bp)  .. (node_12);
  \draw [black,->] (node_4) ..controls (63.294bp,183.26bp) and (55.2bp,194.41bp)  .. (node_6);
  \draw [black,->] (node_4) ..controls (90.321bp,183.63bp) and (103.21bp,195.59bp)  .. (node_14);
  \draw [black,->] (node_5) ..controls (113.42bp,184.39bp) and (83.804bp,198.0bp)  .. (node_6);
  \draw [black,->] (node_5) ..controls (173.36bp,184.09bp) and (194.42bp,197.02bp)  .. (node_16);
  \draw [black,->] (node_6) ..controls (52.058bp,236.56bp) and (63.85bp,248.35bp)  .. (node_22);
  \draw [black,->] (node_7) ..controls (331.63bp,131.01bp) and (351.92bp,143.78bp)  .. (node_8);
  \draw [black,->] (node_7) ..controls (308.96bp,129.81bp) and (310.37bp,140.03bp)  .. (node_9);
  \draw [black,->] (node_7) ..controls (284.0bp,130.93bp) and (265.22bp,143.54bp)  .. (node_12);
  \draw [black,->] (node_8) ..controls (398.71bp,183.26bp) and (406.8bp,194.41bp)  .. (node_10);
  \draw [black,->] (node_8) ..controls (371.68bp,183.63bp) and (358.79bp,195.59bp)  .. (node_13);
  \draw [black,->] (node_9) ..controls (348.58bp,184.39bp) and (378.2bp,198.0bp)  .. (node_10);
  \draw [black,->] (node_9) ..controls (288.64bp,184.09bp) and (267.58bp,197.02bp)  .. (node_16);
  \draw [black,->] (node_10) ..controls (438.51bp,236.48bp) and (449.01bp,248.11bp)  .. (node_11);
  \draw [black,->] (node_10) ..controls (409.94bp,236.56bp) and (398.15bp,248.35bp)  .. (node_18);
  \draw [black,->] (node_11) ..controls (471.0bp,288.81bp) and (471.0bp,299.03bp)  .. (node_20);
  \draw [black,->] (node_12) ..controls (262.55bp,184.27bp) and (289.26bp,197.63bp)  .. (node_13);
  \draw [black,->] (node_12) ..controls (199.45bp,184.27bp) and (172.74bp,197.63bp)  .. (node_14);
  \draw [black,->] (node_12) ..controls (231.0bp,182.81bp) and (231.0bp,193.03bp)  .. (node_16);
  \draw [black,->] (node_13) ..controls (311.92bp,236.86bp) and (294.84bp,249.3bp)  .. (node_15);
  \draw [black,->] (node_13) ..controls (344.92bp,236.33bp) and (354.02bp,247.65bp)  .. (node_18);
  \draw [black,->] (node_14) ..controls (171.14bp,237.54bp) and (207.79bp,251.49bp)  .. (node_15);
  \draw [black,->] (node_14) ..controls (117.08bp,236.33bp) and (107.98bp,247.65bp)  .. (node_22);
  \draw [black,->] (node_15) ..controls (260.2bp,288.81bp) and (258.19bp,299.03bp)  .. (node_24);
  \draw [black,->] (node_16) ..controls (215.94bp,236.56bp) and (204.15bp,248.35bp)  .. (node_17);
  \draw [black,->] (node_16) ..controls (276.18bp,237.61bp) and (315.78bp,251.73bp)  .. (node_18);
  \draw [black,->] (node_16) ..controls (185.82bp,237.61bp) and (146.22bp,251.73bp)  .. (node_22);
  \draw [black,->] (node_17) ..controls (235.12bp,289.37bp) and (293.05bp,305.01bp)  .. (node_19);
  \draw [black,->] (node_17) ..controls (167.43bp,289.41bp) and (157.76bp,300.88bp)  .. (node_23);
  \draw [black,->] (node_18) ..controls (372.6bp,288.81bp) and (371.59bp,299.03bp)  .. (node_19);
  \draw [black,->] (node_18) ..controls (403.93bp,290.24bp) and (429.17bp,303.51bp)  .. (node_20);
  \draw [black,->] (node_18) ..controls (336.13bp,290.46bp) and (303.44bp,304.24bp)  .. (node_24);
  \draw [black,->] (node_19) ..controls (400.55bp,343.27bp) and (427.26bp,356.63bp)  .. (node_21);
  \draw [black,->] (node_19) ..controls (326.86bp,343.54bp) and (290.21bp,357.49bp)  .. (node_26);
  \draw [black,->] (node_20) ..controls (471.0bp,341.81bp) and (471.0bp,352.03bp)  .. (node_21);
  \draw [black,->] (node_20) ..controls (436.42bp,343.39bp) and (406.8bp,357.0bp)  .. (node_25);
  \draw [black,->] (node_21) ..controls (435.18bp,396.39bp) and (404.49bp,410.0bp)  .. (node_28);
  \draw [black,->] (node_22) ..controls (102.4bp,289.48bp) and (113.57bp,301.11bp)  .. (node_23);
  \draw [black,->] (node_22) ..controls (140.5bp,290.73bp) and (187.02bp,305.11bp)  .. (node_24);
  \draw [black,->] (node_23) ..controls (167.24bp,343.24bp) and (192.74bp,356.51bp)  .. (node_26);
  \draw [black,->] (node_24) ..controls (286.25bp,343.35bp) and (314.63bp,356.87bp)  .. (node_25);
  \draw [black,->] (node_24) ..controls (247.9bp,341.96bp) and (244.18bp,352.49bp)  .. (node_26);
  \draw [black,->] (node_25) ..controls (358.88bp,394.81bp) and (358.08bp,405.03bp)  .. (node_28);
  \draw [black,->] (node_26) ..controls (236.96bp,394.81bp) and (238.37bp,405.03bp)  .. (node_27);
  \draw [black,->] (node_26) ..controls (272.87bp,396.46bp) and (305.56bp,410.24bp)  .. (node_28);
  \draw [black,->] (node_27) ..controls (258.32bp,448.63bp) and (271.21bp,460.59bp)  .. (node_29);
  \draw [black,->] (node_28) ..controls (338.41bp,448.71bp) and (324.39bp,460.82bp)  .. (node_29);
  \draw [black,->] (node_29) ..controls (297.0bp,500.81bp) and (297.0bp,511.03bp)  .. (node_30);
\end{tikzpicture}}
\end{scalebox}
\caption{The distributive lattices of submonoids of $\UU_k$ containing the symmetric group $\sym_k$
for $k=4,5,6$.  Each submonoid is labeled by a box containing an antichain of the poset $(\mathcal{P}_k \backslash \{1^k\},\preceq)$.
The antichains of $\preceq$ are in bijection with the downsets of this poset
(see \cite[Section 3.4]{Stanley.ECI})
and in turn are in bijection with the submonoids by Corollary~\ref{cor:submonoid characterization}.
Below each box is the size of the corresponding submonoid.\label{fig:posets}}
\end{figure}
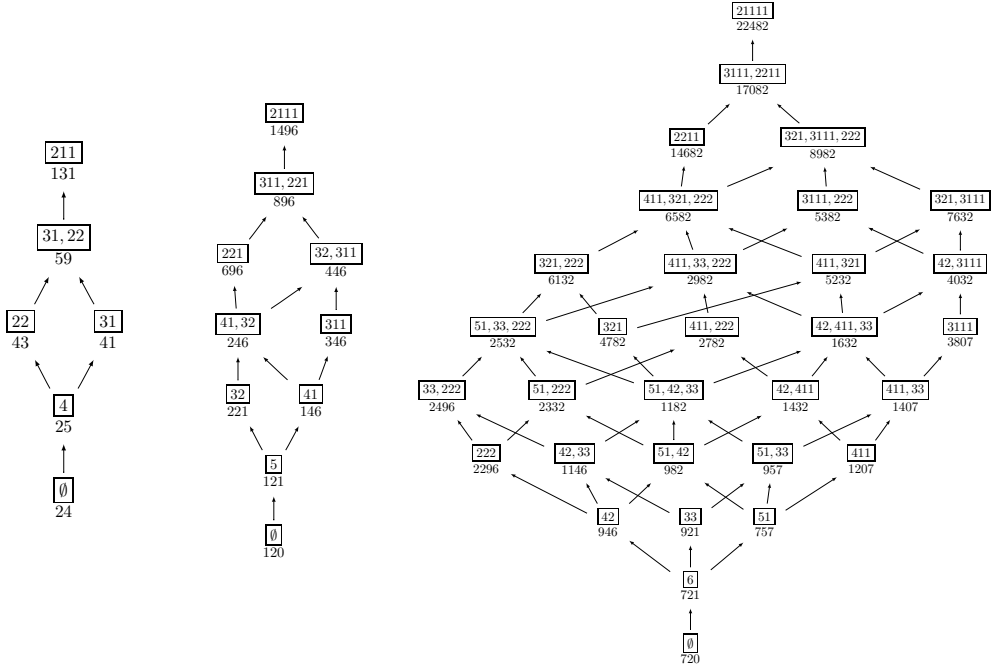

\section{Concluding remarks}
\label{section:remarks}

In this section, we state some interesting consequences of the characterization
of the lattice of submonoids of $\UU_k$ that contain $\sym_k$.

\subsection{The sizes of $\mathscr{J}$-classes are sums of squares of dimensions of irreducibles}
For monoids $M$, $N$ such that $\sym_k \subseteq M \subseteq N \subseteq \UU_k$,
the difference $N\backslash M$ is a union of $\mathscr{J}$-classes.  If $N$ covers
$M$ in the lattice of monoids, then $N = M \cup J_\mu$ for some partition $\mu$.

We note that $J_{1^k} = \sym_k$.
More generally, for $\lambda = 1^{a_1}2^{a_2}\cdots k^{a_k} \vdash k$,
let $\mathsf{sp}_k(\lambda)$ be the number of set partitions of $[k]$
such that the $\mathsf{type}$ is equal to $\lambda$, then
\begin{equation}
\label{eq:nosetpartitions}
    \mathsf{sp}_k(\lambda) = \frac{k!}{a_1! \cdots a_k!(1!)^{a_1}(2!)^{a_2} \cdots (k!)^{a_k}}~.
\end{equation}
Since every element of $J_\lambda$ can be considered as a bijection from a part of size $i$
in the top row to a part of size $i$ in the bottom row,
the number of elements of $J_\lambda$ is equal to $\mathsf{sp}_k(\lambda)^2 a_1! a_2! \cdots a_k!$~.

Recall from \cite{OSSZ.2022}, that the irreducible representations of $\UU_k$
are indexed by sequences of partitions,
$\vec{\lambda} = (\lambda^{(1)}, \lambda^{(2)}, \ldots, \lambda^{(k)})$,
such that $k = |\lambda^{(1)}|+ 2|\lambda^{(2)}| + \cdots + k|\lambda^{(k)}|.$
Let $I_k$ denote this set of sequences of partitions
and let $W_{\UU_k}^{\vec{\lambda}}$ represent the isomorphism class
of an irreducible $\UU_k$ module indexed by $\vec{\lambda} \in I_k$.

Now let $f^\lambda$ denote the number of standard tableaux
of shape $\lambda$ and define the
$\vtype(\vec{\lambda}) = 1^{|\lambda^{(1)}|}2^{|\lambda^{(2)}|}\cdots k^{|\lambda^{(k)}|}$.
Then~\cite[Corollary 3.19]{OSSZ.2022} states that
\begin{equation}
\label{equation.dimension}
\dim W_{\mathcal{U}_k}^{\vec{\lambda}} = \mathsf{sp}_k(\vtype(\vec{\lambda}))
f^{\lambda^{(1)}} f^{\lambda^{(2)}}\cdots f^{\lambda^{(k)}}~.
\end{equation}

\begin{prop}
For $\mu \vdash k$, we have
\[
	|J_\mu| = \sum_{\stackrel{\vec{\lambda}}{\vtype(\vec{\lambda}) = \mu}} (\dim W_{\mathcal{U}_k}^{\vec{\lambda}})^2~.
\]
\end{prop}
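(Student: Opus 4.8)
The plan is to derive the identity purely by combining the two closed formulas recalled immediately above: the cardinality $|J_\mu| = \mathsf{sp}_k(\mu)^2\, a_1!\,a_2!\cdots a_k!$ for $\mu = 1^{a_1}2^{a_2}\cdots k^{a_k}$, and the dimension formula \eqref{equation.dimension}. First I would square \eqref{equation.dimension} to obtain
\[
    \bigl(\dim W_{\mathcal{U}_k}^{\vec{\lambda}}\bigr)^2
    = \mathsf{sp}_k(\vtype(\vec{\lambda}))^2\,
    \bigl(f^{\lambda^{(1)}}\bigr)^2 \bigl(f^{\lambda^{(2)}}\bigr)^2 \cdots \bigl(f^{\lambda^{(k)}}\bigr)^2 ,
\]
and observe that the summation condition $\vtype(\vec{\lambda}) = \mu$ is equivalent to $|\lambda^{(i)}| = a_i$ for every $i$. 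In particular $\mathsf{sp}_k(\vtype(\vec{\lambda})) = \mathsf{sp}_k(\mu)$ is constant over the index set of the sum, so this factor pulls out.

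It then remains to evaluate $\sum_{\vec{\lambda}} \prod_{i=1}^{k} (f^{\lambda^{(i)}})^2$, where the sum is over all $\vec{\lambda}$ with $|\lambda^{(i)}| = a_i$ for each $i$. Since the coordinates $\lambda^{(i)}$ range independently over the partitions of $a_i$, this sum factors as $\prod_{i=1}^{k} \bigl(\sum_{\nu \vdash a_i} (f^{\nu})^2\bigr)$, and each inner sum equals $a_i!$ by the classical identity $\sum_{\nu \vdash n}(f^{\nu})^2 = n! = |\sym_n|$ (the sum of the squares of the dimensions of the irreducible $\sym_n$-modules equals the order of the regular representation). Therefore
\[
    \sum_{\stackrel{\vec{\lambda}}{\vtype(\vec{\lambda}) = \mu}}
    \bigl(\dim W_{\mathcal{U}_k}^{\vec{\lambda}}\bigr)^2
    = \mathsf{sp}_k(\mu)^2\, a_1!\, a_2! \cdots a_k!
    = |J_\mu| ,
\]
which is exactly the claimed equality.

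I do not anticipate a genuine obstacle here: the statement is a bookkeeping consequence of formulas \eqref{eq:nosetpartitions} and \eqref{equation.dimension}, the elementary count of $|J_\mu|$ established just above, and the standard symmetric-group identity. The only point meriting a sentence of care is the translation of the constraint $\vtype(\vec{\lambda}) = \mu$ into the coordinatewise conditions $|\lambda^{(i)}| = a_i$, since it is precisely this reformulation that both makes the $\mathsf{sp}_k$ factor constant and renders the remaining product separable.
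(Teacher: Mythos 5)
Your proposal is correct and follows exactly the same route as the paper's own proof: square the dimension formula \eqref{equation.dimension}, translate $\vtype(\vec{\lambda}) = \mu$ into $|\lambda^{(i)}| = a_i$, pull out the constant $\mathsf{sp}_k(\mu)^2$, factor the remaining sum over the independent coordinates, and apply $\sum_{\nu \vdash a}(f^{\nu})^2 = a!$ together with the count $|J_\mu| = \mathsf{sp}_k(\mu)^2\, a_1!\cdots a_k!$. No differences worth noting.
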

\begin{proof}
If $\mu = 1^{a_1}2^{a_2}\cdots k^{a_k}$, then $\vtype(\vec{\lambda}) = \mu$ implies that
$|\lambda^{(i)}| = a_i$.  Recall that $a! = \sum_{\lambda \vdash a} (f^\lambda)^2$.
Therefore by~\eqref{equation.dimension},
\[
	\sum_{\stackrel{\vec{\lambda}\in I_k}{\vtype(\vec{\lambda}) = \mu}} (\dim W_{\mathcal{U}_k}^{\vec{\lambda}})^2
	= \mathsf{sp}_k(\mu)^2 \sum_{\stackrel{\vec{\lambda}\in I_k}{\lambda^{(i)} \vdash a_i}} 
	(f^{\lambda^{(1)}})^2 (f^{\lambda^{(2)}})^2 \cdots (f^{\lambda^{(k)}})^2,
\]
where the sum on the right hand side is over all $\vec{\lambda} \in I_k$
such that $\lambda^{(i)}$ is a partition of $a_i$.
The conclusion that this expression is equal to $|J_\mu|$
follows from the enumeration $|J_\mu| = \mathsf{sp}_k(\mu)^2 a_1! a_2! \cdots a_k!$
of the elements of the $\mathscr{J}$-class described above.
\end{proof}

\subsection{The order $\preceq$ on partitions is not Cohen--Macaulay}
Ziegler~\cite{Ziegler.1986} proved that for $k \geqslant 19$, the poset
of integers ordered by refinement is not Cohen--Macaulay (as it had been previously
conjectured).  His proof produces an interval
of the poset at $k=19$ which is an obstruction
to the poset being Cohen--Macaulay and states that larger examples exist
at every $k$.

\begin{center}
\begin{scalebox}{.7}{
\begin{tikzpicture}[>=latex,line join=bevel,]
\node (node_0) at (215.5bp,8.5bp) [draw,draw=none] {$874$};
  \node (node_1) at (248.5bp,61.5bp) [draw,draw=none] {$8731$};
  \node (node_2) at (50.5bp,61.5bp) [draw,draw=none] {$8722$};
  \node (node_3) at (116.5bp,61.5bp) [draw,draw=none] {$8641$};
  \node (node_5) at (314.5bp,61.5bp) [draw,draw=none] {$8542$};
  \node (node_7) at (380.5bp,61.5bp) [draw,draw=none] {$7642$};
  \node (node_9) at (182.5bp,61.5bp) [draw,draw=none] {$7543$};
  \node (node_6) at (253.5bp,114.5bp) [draw,draw=none] {$85321$};
  \node (node_8) at (328.5bp,114.5bp) [draw,draw=none] {$76321$};
  \node (node_4) at (28.5bp,114.5bp) [draw,draw=none] {$86221$};
  \node (node_10) at (103.5bp,114.5bp) [draw,draw=none] {$75322$};
  \node (node_11) at (178.5bp,114.5bp) [draw,draw=none] {$65431$};
  \node (node_13) at (215.5bp,167.5bp) [draw,draw=none] {$653221$};
  \node (node_12) at (403.5bp,114.5bp) [draw,draw=none] {$65422$};
  \draw [black,->] (node_0) ..controls (225.0bp,24.182bp) and (232.12bp,35.181bp)  .. (node_1);
  \draw [black,->] (node_0) ..controls (168.9bp,23.904bp) and (118.37bp,39.521bp)  .. (node_2);
  \draw [black,->] (node_0) ..controls (184.88bp,25.274bp) and (158.96bp,38.628bp)  .. (node_3);
  \draw [black,->] (node_0) ..controls (246.12bp,25.274bp) and (272.04bp,38.628bp)  .. (node_5);
  \draw [black,->] (node_0) ..controls (262.1bp,23.904bp) and (312.63bp,39.521bp)  .. (node_7);
  \draw [black,->] (node_0) ..controls (206.0bp,24.182bp) and (198.88bp,35.181bp)  .. (node_9);
  \draw [black,->] (node_1) ..controls (249.9bp,76.805bp) and (250.91bp,87.034bp)  .. (node_6);
  \draw [black,->] (node_1) ..controls (272.83bp,78.01bp) and (292.86bp,90.78bp)  .. (node_8);
  \draw [black,->] (node_2) ..controls (44.232bp,77.031bp) and (39.621bp,87.72bp)  .. (node_4);
  \draw [black,->] (node_2) ..controls (66.149bp,77.558bp) and (78.403bp,89.35bp)  .. (node_10);
  \draw [black,->] (node_3) ..controls (89.607bp,78.086bp) and (67.287bp,91.021bp)  .. (node_4);
  \draw [black,->] (node_3) ..controls (134.99bp,77.709bp) and (149.72bp,89.823bp)  .. (node_11);
  \draw [black,->] (node_4) ..controls (88.41bp,131.84bp) and (142.1bp,146.48bp)  .. (node_13);
  \draw [black,->] (node_5) ..controls (296.31bp,77.709bp) and (281.82bp,89.823bp)  .. (node_6);
  \draw [black,->] (node_5) ..controls (341.83bp,78.161bp) and (364.7bp,91.264bp)  .. (node_12);
  \draw [black,->] (node_6) ..controls (242.45bp,130.33bp) and (234.02bp,141.65bp)  .. (node_13);
  \draw [black,->] (node_7) ..controls (365.15bp,77.558bp) and (353.12bp,89.35bp)  .. (node_8);
  \draw [black,->] (node_7) ..controls (387.05bp,77.031bp) and (391.87bp,87.72bp)  .. (node_12);
  \draw [black,->] (node_8) ..controls (293.3bp,131.39bp) and (263.15bp,145.0bp)  .. (node_13);
  \draw [black,->] (node_9) ..controls (158.47bp,78.01bp) and (138.69bp,90.78bp)  .. (node_10);
  \draw [black,->] (node_9) ..controls (181.38bp,76.805bp) and (180.58bp,87.034bp)  .. (node_11);
  \draw [black,->] (node_10) ..controls (138.39bp,131.39bp) and (168.27bp,145.0bp)  .. (node_13);
  \draw [black,->] (node_11) ..controls (189.21bp,130.26bp) and (197.3bp,141.41bp)  .. (node_13);
  \draw [black,->] (node_12) ..controls (343.27bp,131.84bp) and (289.29bp,146.48bp)  .. (node_13);
\end{tikzpicture}}
\end{scalebox}
\end{center}

We remark that this same interval also occurs in our poset and hence this poset will also not be Cohen--Macaulay.

Moreover, Ziegler~\cite{Ziegler.1986} proved that for $k\geqslant 111$ the M\"obius function
is not alternating. His proof also applies to our poset.

\subsection{The number of submonoids of $\UU_k$ that contain $\sym_k$}
Corollary~\ref{cor:submonoid characterization} can be used to compute the number of submonoids of
$\UU_k$ containing $\sym_k$
(i.e. $n_k := \#\{M: \sym_k \subseteq M \subseteq \UU_k \}$)
for small $k$
by computing the number of antichains
of the poset of partitions of $k$ not equal to $1^k$ under the order $\preceq$.
Using the poset functionality of {\sc SageMath}~\cite{sage}
we were able to compute the following table:
\begin{center}
\begin{scalebox}{.95}{
\begin{tabular}{c||c|c|c|c|c|c|c|c|c|c|c|c|c|c}
$k$&1&2&3&4&5&6&7&8&9&10&11&12&13&14\\
\hline
$n_k$&1&2&3&6&10&31&63&287&1099&8640&62658&1546891&29789119&2525655957
\end{tabular}}\end{scalebox}
\end{center}
The sequence has been submitted to \cite{OEIS} and is now sequence number
\href{https://oeis.org/A371505}{A371505}.

\bibliographystyle{plain}
\bibliography{main}{}

\end{document}